\newcommand{\shf}{\underline}
\newtheorem{thm}{Theorem}[section]
\newtheorem{prop}[thm]{Proposition}
\theoremstyle{remark}
\newtheorem{rmk}[thm]{Remark}
\theoremstyle{definition}
\newtheorem{example}[thm]{Example}
\newtheorem{defn}[thm]{Definition}
\DeclareMathOperator{\Hom}{Hom}
\DeclareMathOperator{\Ext}{Ext}
\newcommand{\z}{^{(0)}}
\newcommand{\2}{^{(2)}}
\newcommand{\inv}{^{-1}}
\newcommand{\bi}{\begin{itemize}}
\newcommand{\ei}{\end{itemize}}
\newcommand{\be}{\begin{enumerate}}
\newcommand{\ee}{\end{enumerate}}
\newcommand{\T}{\mathbb{T}}
\renewcommand{\H}{\mathcal{H}}
\newcommand{\A}{\mathcal{A}}
\newcommand{\B}{\mathcal{B}}
\newcommand{\C}{\mathcal{C}}
\newcommand{\G}{\mathcal{G}}
\renewcommand{\S}{\mathfrak{S}}
\newcommand{\N}{\mathbb{N}}
\newcommand{\Z}{\mathbb{Z}}
\newcommand{\roof}[1]{\lceil{#1}\rceil}
\begin{document}

\setcounter{page}{1}

\title[Cohomology for small categories]{Cohomology for small categories: $k$-graphs and groupoids}

\author[E. Gillaspy \MakeLowercase{and} A. Kumjian]{Elizabeth Gillaspy$^{1,*}$  \MakeLowercase{and}  Alexander Kumjian$^2$}

\address{$^{1}$Mathematisches Institut, Universit\"at M\"unster, 48149 M\"unster, Germany.}
\email{\textcolor[rgb]{0.00,0.00,0.84}{gillaspy@uni-muenster.de}}

\address{$^{2}$Department of Mathematics, University of Nevada, Reno, NV 89557, USA.}
\email{\textcolor[rgb]{0.00,0.00,0.84}{alex@unr.edu}}


\let\thefootnote\relax\footnote{Copyright 2017 by the Tusi Mathematical Research Group.}

\subjclass[2010]{Primary 18B40; Secondary 55N30, 22E41, 46L05.}

\keywords{Higher-rank graphs; groupoids; cohomology.}

\date{Received: xxxxxx; Revised: yyyyyy; Accepted: zzzzzz.
\newline \indent $^{*}$Corresponding author}

\begin{abstract}
Given a higher-rank graph $\Lambda$, we investigate the relationship between the cohomology of $\Lambda$ and the cohomology of the associated groupoid $\G_\Lambda$.
We define an exact functor between the abelian category of right modules over a higher-rank graph $\Lambda$ 
and the category of $\G_\Lambda$-sheaves, where $\G_\Lambda$ is the path groupoid of $\Lambda$.
We use this functor to construct compatible homomorphisms
from both the cohomology of $\Lambda$ with coefficients in a right $\Lambda$-module, and the 
continuous cocycle cohomology of  $\G_\Lambda$ with values in the corresponding $\G_\Lambda$-sheaf,
 into the sheaf cohomology of $\G_\Lambda$.
\end{abstract} 

\maketitle
\section{Introduction}

Higher-rank graphs (also called $k$-graphs) were introduced by Kumjian and Pask in \cite{kp} as a combinatorial model for certain higher rank Cuntz-Krieger $C^*$-algebras (see \cite{rob-ste}) in a construction that also generalized the definition of the $C^*$-algebra of a directed graph.
As the name suggests, a $k$-graph is often best viewed as a $k$-dimensional generalization of a directed graph.  
Formally, however,  a $k$-graph (see Definition \ref{def:kgraph} below) is defined to be a {countable} category with additional structure.
The combinatorial nature of $k$-graph $C^*$-algebras has facilitated the analysis of  structural properties of these $C^*$-algebras, such as their simplicity and ideal structure \cite{rsy2, robertson-sims, skalski-zacharias-1, davidson-yang-periodicity, kang-pask}, quasidiagonality \cite{clark-huef-sims} and KMS states \cite{aHLRS, aHLRS1}.  In particular, results such as \cite{spielberg-kirchberg,bnrsw,bcfs,prrs} show that $k$-graphs often provide concrete examples of $C^*$-algebras which are relevant to Elliott's  classification program for simple separable nuclear $C^*$-algebras.

Under mild hypotheses -- namely, that $\Lambda$ is row-finite and has no sources -- we can associate a groupoid $\G_\Lambda$ to a $k$-graph $\Lambda$, 
and it was also shown in \cite{kp} that the groupoid $C^*$-algebra $C^*(\G_\Lambda)$, as defined in \cite{renault}, is isomorphic to the $C^*$-algebra $C^*(\Lambda)$ of the $k$-graph.
More generally, given a locally compact Hausdorff groupoid $\G$ and a continuous $\T$-valued 2-cocycle $\sigma$ on $\G$,
 Renault  defines in \cite{renault} a twisted version $C^*(\G, \sigma)$ of the groupoid $C^*$-algebra, such that, up to isomorphism, $C^*(\G, \sigma)$ only depends on the class $[\sigma] \in H^2_c(\G, \T)$ of $\sigma$ in the continuous cocycle cohomology of $\G$ with coefficients in $\T$.  

Kumjian, Pask, and Sims initiated in \cite{kps-hlogy}  the study of the cohomology of a $k$-graph taking values in an abelian group $A$.  They also explained how to construct, from a $k$-graph $\Lambda$ and a $\T$-valued 2-cocycle on $\Lambda$, a twisted higher-rank-graph $C^*$-algebra $C^*(\Lambda, c)$. 
In  Lemma 6.3 of \cite{kps-twisted}, they constructed a map  $\sigma$ from the $k$-graph 2-cocycles $Z^2(\Lambda, A) $ to the continuous 2-cocycles $Z^2_c(\G_\Lambda, A)$ of the associated groupoid,
such that $\sigma$ maps coboundaries to coboundaries, and proved that $C^*(\Lambda, c) \cong C^*(\G_\Lambda, \sigma(c))$ 
when $A = \T$  (see \cite[Theorem 6.5 and Corollary 7.9]{kps-twisted}).  
However, the map $\sigma$ is \emph{ad hoc} and does not easily generalize to provide a homomorphism $H^n(\Lambda, A) \to H^n_c(\G_\Lambda, A)$ for $n \not= 2$.  

{The initial motivation for the research presented in this article was to understand more thoroughly the relationship between the cohomology of a $k$-graph $\Lambda$ and that of its associated groupoid.}

Since $k$-graphs are small categories, 
instead of the approach to the cohomology of $k$-graphs from \cite{kps-twisted}, a natural alternative is to use the cohomology theory of small categories (cf.~\cite{watts,baues-wirsching,xu}). {We review these constructions in Section \ref{sec:modules} below.}
For a $k$-graph $\Lambda$,
Proposition \ref{prop:same-coh} below shows that the usual definition of the cohomology of a small category with coefficients in a $\Lambda$-module (see Definitions \ref{def:module} and \ref{def:module-cohlogy} below) agrees with the categorical cohomology of \cite{kps-twisted} {in the case that the module is a constant abelian group}.  

The advantage of the $\Lambda$-module perspective on the cohomology of a $k$-graph $\Lambda$ is that it enables us to relate the $k$-graph cohomology to the groupoid sheaf cohomology in all degrees.
The
 first main result of this paper, Theorem \ref{thm:mod-sheaf-functor}, is the construction {(under mild hypotheses)} of an exact additive functor, $\A \mapsto \underline{\A}$, from the category of right $\Lambda$-modules to the category of $\G_\Lambda$-sheaves.  
 In Section \ref{sec:equiv-sheaf}, we then construct natural maps into the sheaf cohomology $H^n(\G_\Lambda, \shf{\A})$ 
from both the $k$-graph cohomology $H^n(\Lambda, \A)$ and the continuous cocycle cohomology $H^n_c(\G_\Lambda, \shf{\A})$.
 Moreover, we show in Proposition \ref{pr:psi-isom} that both of these maps factor through the functor $\A \mapsto \shf{\A}$.  Indeed, the second main result of the paper, Theorem 4.1, consists of showing  the commutativity of the diagram below (for $n \leq 2$):
 \[ 
\begin{tikzpicture}[>=stealth]
    \node (02) at (0,2) {$H^n(\Lambda, \A)$};
    \node (42) at (4,2) {$H^n(\Hom_{\G_\Lambda}(\shf{P^*}, \shf{\A}))$};
    \node (00) at (0,0) {$H^n_c(\G_\Lambda,  \shf{\A}))$};
    \node (40) at (4,0) {$H^n(\Hom_{\G_\Lambda}(\mathscr{P}_*, \shf{\A}))$};
    \node (82) at (8,2) {$H^n(\G_\Lambda,  \shf{\A}))$};
    \draw[->] (40) -- node[right] {${\psi_n^*}$} (42);
    \draw[->] (00) -- node[above] {${{\eta^n}}$} node[below] {${\scriptstyle{\cong}}$} (40);
    \draw[->] (02) -- (42);
    \draw[->] (02) -- node[left] {\cite{kps-twisted}}  node[right]{($n \leq 2$)} (00);
    \draw[->] (40) -- node[below] {${\rho^n_{\mathscr{P}}}$} (82);
    \draw[->] (42)-- node[above] {${\rho^n_{\shf{P}}}$} (82);    
\end{tikzpicture}
 \]
Although the left-hand vertical arrow above is not defined for $n > 2$, the right-hand triangle commutes for all $n$, and the composition 
\[\rho^n_{\mathscr{P}} \circ \eta^n: H^n_c(\G, \mathscr{A}) \to H^n(\G, \mathscr{A})\]
 is well defined for any \'etale groupoid $\G$ and any $\G$-sheaf $\mathscr{A}$; see Propositions  \ref{prop:cocycle-coh},  \ref{prop:exact-to-sheaf}  and \ref{pr:psi-isom}
below.

 We observe in Remark \ref{rmk:ample-gpoid-sheaf} that the diagonal arrow in the diagram above is an isomorphism when $n=2$.  
One might then begin to suspect that other arrows in the diagram might also be isomorphisms.  
 However, Kumjian, Pask, and Sims exhibit in Remark 6.9 of \cite{kps-twisted}  a 1-graph $B_2$ such that the left-hand vertical arrow $H^1(B_2, A) \to H^1(\G_{B_2}, A)$ is not onto.
 Example \ref{ex:not-isom} below complements this example, and answers in the negative a conjecture from \cite{kps-twisted}, by showing that the homomorphism $H^1(\Lambda, \A) \to H^1_c(\G_\Lambda, \underline{\A})$ need not be  injective.



\section{Modules over categories}
\label{sec:modules}

In this section, we present the cohomology theory for small categories which provides the foundation for 
our main results in Sections \ref{sec:kgraph-gpoid} and \ref{sec:equiv-sheaf},
relating the cohomology of a higher-rank graph with that of its associated groupoid.  Most of the material in this section is surely well known to the experts in the area,  
but we have chosen to present it in detail here both as a courtesy to our intended audience, namely those interested in higher-rank graphs and their associated $C^*$-algebras, and to have the material available in the form we require.

{To be precise, we begin by describing (in Definition \ref{def:module})
modules over small categories, and (in Definition \ref{def:proj-res}) a canonical projective resolution of the constant module $\Z^\Lambda$ over a category $\Lambda$, which we use in Definition \ref{def:module-cohlogy} to define the cohomology groups of $\Lambda$ with coefficients in a $\Lambda$-module $\A$.  We show in Proposition \ref{prop:same-coh} that the cohomology groups thus defined agree with the categorical cohomology groups employed in \cite{kps-twisted} to study the cohomology of  higher-rank graphs. }

Before we begin, a brief word about notation.  Throughout this paper, $\N := \{ 0, 1, 2, \ldots \}$ is regarded as a monoid under addition, or equivalently a category with one object.  Thus, the notation $n \in \N$ indicates that $n$ is a morphism, rather than an object, of $\N$.  Inspired by this, we use the arrows-only picture of category theory throughout this paper, so that the notation 
$\lambda \in \Lambda$ implies that $\lambda$ is a morphism in the category $\Lambda$.
\begin{defn}(cf.~\cite{watts})
\label{def:module}
Given a small category $\Lambda$, we define a \emph{(right) $\Lambda$-module} to be a contravariant functor $\A: \Lambda \to {\bf Ab}$, where ${\bf Ab}$ denotes the category of abelian groups.  For $v \in \text{Obj}(\Lambda)$, we will write  $\A_v$ for the associated abelian group, and for a morphism $\lambda \in \Lambda$, we will write $\A(\lambda): \A_{r(\lambda)} \to \A_{s(\lambda)}$ for the associated homomorphism of abelian groups. 
A \emph{morphism of $\Lambda$-modules} is then a natural transformation.  A sequence of $\Lambda$-modules 
\[ \begin{tikzpicture}
			\node (phi) [label=left:{$\A$}] at (-2,0){};
			\node (psi) [label=center:{$\B$}, outer sep = 4pt] at (0,0){};
			\node (xi) [label=right:{$\C$}] at (2,0){};
			
		\begin{scope}[->,  line width=0.5]
			\draw  (phi) to node[above] {$\eta$}  (psi); 		
			\draw  (psi) to node[above] {$\rho$}  (xi); 		
		\end{scope}
	\end{tikzpicture}
\]
is \emph{exact} if for all $v \in \text{Obj}(\Lambda)$, the sequence of abelian groups 
\[ \begin{tikzpicture}
			\node (phi) [label=left:{$\A_v$}] at (-2,0){};
			\node (psi) [label=center:{$\B_v$}, outer sep = 4pt] at (0,0){};
			\node (xi) [label=right:{$\C_v$}] at (2,0){};
			
		\begin{scope}[->,  line width=0.5]
			\draw  (phi) to node[above] {$\eta(v)$}  (psi); 		
			\draw  (psi) to node[above] {$\rho(v)$}  (xi); 		
		\end{scope}
	\end{tikzpicture}
\]
is exact.  
\end{defn}

Note that $\mathfrak{M}_\Lambda$, the category of right $\Lambda$-modules, forms an abelian category.
Given an abelian group $A$, we write $A^\Lambda$ for the $\Lambda$-module, or functor, given by $A^\Lambda_v = A$ for all $  v \in \text{Obj}(\Lambda)$ and $A^\Lambda(\lambda) = id$ for all morphisms $ \lambda \in \Lambda$.


\begin{defn}
\label{def:proj-module}
Let $\Lambda$ be a small category.  A \emph{projective $\Lambda$-module} is a $\Lambda$-module $\mathcal{P}$ such that every surjective morphism $\eta: \A \to \mathcal{P}$ of $\Lambda$-modules splits as a map of $\Lambda$-modules. 

Equivalently, $\mathcal{P}$ is projective if, for any exact sequence $\A \to \B \to 0$ of $\Lambda$-modules, the natural map $\Hom_\Lambda(\mathcal{P}, \A) \to \Hom_\Lambda(\mathcal{P}, \B)$ is onto.
\end{defn}

{Let $\{ \A_n\}_{n \in \N}$ be a sequence of $\Lambda$-modules and $\{ \eta_n: \A_{n} \to \A_{n-1}\}_{n\geq 1}$ a sequence of $\Lambda$-module morphisms.  We say that $\{ \A_n, \eta_n\}_{n\in \N}$ is a \emph{resolution} of a $\Lambda$-module $\B$  if $\ker \eta_n = \text{Im}\, \eta_{n+1}$ for all $n \geq 1$, and there also exists a  surjective morphism $\eta_0: \A_0 \to \B$ of $\Lambda$-modules such that  $\ker \eta_0 = \text{Im}\, \eta_1$. 

To define the cohomology of a small category $\Lambda$, we will use a projective resolution $\{P^n, d_n\}_{n\in \N}$ of the constant $\Lambda$-module $\Z^\Lambda$.}
The projective resolution $\{P^n, d_n\}_n$  is essentially 
a reformulation of the projective resolution $\{P_n, \sigma_n\}_n$ described on \cite{xu} page 2567. {Moreover, the cohomology groups arising from this projective resolution agree with the definitions in the literature of the categorical cohomology of a higher-rank graph or an \'etale groupoid; see Propositions \ref{prop:same-coh} and \ref{prop:cocycle-coh} below.}

\begin{defn}
\label{def:proj-res}
Let $\Lambda$ be a small category.  For $n \geq 1$, we write $\Lambda^{*n}$ for the set of composable $n$-tuples in $\Lambda$:
\[\Lambda^{*n} = \{( \lambda_1, \ldots, \lambda_n)\in \prod_{i=1}^n \Lambda: s(\lambda_i) = r(\lambda_{i+1}) \ \forall \ i \}.\]
We define $\Lambda^{*0} = \Lambda^0$ to be the objects of $\Lambda$.
For each $v \in \text{Obj}(\Lambda)$, let 
\[P^n_v = \Z \Lambda^{*(n+1)}v\] be the free abelian group generated by the collection $\Lambda^{*(n+1)}v$ of all $(n+1)$-tuples $(\lambda_0,\ldots, \lambda_n)$ of composable morphisms  with $s(\lambda_{n})=v$. 
When it is useful to distinguish an $(n+1)$-tuple $(\lambda_0, \ldots, \lambda_n)$ from the associated generator of $P^n_v$, we will write $[\lambda_0, \ldots, \lambda_n]$ for the latter. 

  For each $\lambda \in \Lambda$, there is a unique morphism $P^n(\lambda): P^n_{r(\lambda)} \to P^n_{s(\lambda)}$ such that  
  \[ 
  P^n(\lambda)([\lambda_0, \lambda_1, \ldots, \lambda_n]) = [\lambda_0, \lambda_1, \ldots, \lambda_n\lambda ].
  \]
\end{defn}

For $n \geq 1$, we define  $d_n: P^n_v \to P^{n-1}_v$ on generators $[\lambda_0, \lambda_1, \ldots, \lambda_n]$ and extend $\Z$-linearly:
\begin{align*}
d_n([\lambda_0, \lambda_1, \ldots, \lambda_n]) &=   [\lambda_1, \lambda_2, \ldots, \lambda_n] + \sum_{i=1}^{n} (-1)^{i} [\lambda_0, \lambda_1, \ldots, \lambda_{i-1} \lambda_{i}, \ldots, \lambda_n].
\end{align*}
Similarly, the morphism $d_0: P^0_v \to \Z$ is defined on generators by  $d_0([\lambda]) = 1 $.  
One 
easily checks that $d_n$ is a natural transformation for each $n$, and that $d_n \circ d_{n+1} = 0$.  

\begin{prop}
The complex $\{P^n, d_n\}_{n\in \N}$ is a projective resolution of $\Z^\Lambda$.
\label{prop:proj-res}
\end{prop}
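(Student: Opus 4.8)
The plan is to verify separately the two defining properties of a projective resolution: that each $P^n$ is a projective $\Lambda$-module, and that the augmented complex $\cdots \to P^1 \xrightarrow{d_1} P^0 \xrightarrow{d_0} \Z^\Lambda \to 0$ is exact. By Definition~\ref{def:module} the latter is an objectwise statement: it is enough to show that for each $v \in \text{Obj}(\Lambda)$ the evaluated complex of abelian groups $\cdots \to P^1_v \xrightarrow{d_1} P^0_v \xrightarrow{d_0} \Z \to 0$ is exact. That $\{P^n,d_n\}$ is a complex of $\Lambda$-modules (i.e.\ $d_n\circ d_{n+1}=0$, and each $d_n$ is natural) has already been recorded above, so no further work is needed there.

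For projectivity, I would first introduce, for each object $w$, the free $\Lambda$-module $F_w$ at $w$: the $\Lambda$-module with $(F_w)_v$ the free abelian group on the set $\Lambda(v,w)$ of morphisms from $v$ to $w$, with $\Lambda$ acting by precomposition. By the Yoneda lemma, $\Hom_\Lambda(F_w,-)$ is naturally isomorphic to the evaluation functor $\A\mapsto\A_w$; since $\mathfrak{M}_\Lambda$ is abelian with exactness---hence epimorphisms---detected objectwise (Definition~\ref{def:module}), this functor carries epimorphisms to epimorphisms, so $F_w$ is projective. Next I would note that a composable $(n+1)$-tuple $(\lambda_0,\ldots,\lambda_n)$ with $s(\lambda_n)=v$ is exactly a composable $n$-tuple $(\lambda_0,\ldots,\lambda_{n-1})\in\Lambda^{*n}$ together with a morphism $\lambda_n\in\Lambda(v,s(\lambda_{n-1}))$, and that the $\Lambda$-action on $P^n$ only ever alters the last coordinate; hence there is an isomorphism of $\Lambda$-modules $P^n\cong\bigoplus_{(\lambda_0,\ldots,\lambda_{n-1})\in\Lambda^{*n}}F_{s(\lambda_{n-1})}$ (with $P^0\cong\bigoplus_{w\in\Lambda^0}F_w$). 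Since $\Hom_\Lambda(\bigoplus_i F_{w_i},-)\cong\prod_i\Hom_\Lambda(F_{w_i},-)$ and a product of surjections of abelian groups is a surjection, each $P^n$ is projective.

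For exactness I would fix $v$ and build a contracting homotopy for the evaluated augmented complex. Set $s_{-1}\colon\Z\to P^0_v$, $s_{-1}(1)=[1_v]$, and, for $n\geq 0$, $s_n\colon P^n_v\to P^{n+1}_v$, $s_n([\lambda_0,\ldots,\lambda_n])=[\lambda_0,\ldots,\lambda_n,1_{s(\lambda_n)}]$---that is, append the identity at $v$ as a new last coordinate. Expanding $d_{n+1}(s_n([\lambda_0,\ldots,\lambda_n]))$ and using that (i) the differential $d_{n+1}$ has no ``drop the last letter'' face, (ii) the face that composes $\lambda_n$ with $1_{s(\lambda_n)}$ returns $[\lambda_0,\ldots,\lambda_n]$, and (iii) the remaining faces reproduce $s_{n-1}(d_n([\lambda_0,\ldots,\lambda_n]))$ term by term, one obtains $d_{n+1}s_n-s_{n-1}d_n=(-1)^{n+1}\,\mathrm{id}_{P^n_v}$ uniformly for all $n\geq 0$, together with $d_0 s_{-1}=\mathrm{id}_\Z$. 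Setting $h_n:=(-1)^{n+1}s_n$ (so that $h_{-1}=s_{-1}$) converts this into an honest chain contraction: $h_{n-1}d_n+d_{n+1}h_n=\mathrm{id}_{P^n_v}$ for all $n\geq 0$ and $d_0 h_{-1}=\mathrm{id}_\Z$. The existence of a chain contraction forces the augmented complex to be exact in every degree, which is exactly the assertion that $d_0$ is surjective with $\ker d_0=\mathrm{Im}\,d_1$ and that $\ker d_n=\mathrm{Im}\,d_{n+1}$ for all $n\geq 1$. Note that $s_n$ is deliberately not natural in $v$, hence not a morphism of $\Lambda$-modules; this is harmless, since exactness in $\mathfrak{M}_\Lambda$ is checked objectwise.

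I expect the only genuine obstacle to be bookkeeping rather than anything conceptual: getting the signs and the low-degree terms in the contracting homotopy exactly right. The structural point that makes the argument go through is the asymmetry of $d_n$---it contains the ``drop $\lambda_0$'' face and the composition faces but no ``drop $\lambda_n$'' face---which is precisely why the contraction must append the identity on the \emph{right}; appending it on the left would produce a cancelling pair of terms and fail, and one must check carefully that no such cancellation occurs with the right-hand choice.
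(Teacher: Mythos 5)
Your proof is correct and follows essentially the same route as the paper: the same contracting homotopy $[\lambda_0,\ldots,\lambda_n]\mapsto[\lambda_0,\ldots,\lambda_n,s(\lambda_n)]$ (you merely factor the sign $(-1)^{n+1}$ out of the homotopy rather than building it in), and the same underlying observation for projectivity, namely that $P^n$ is freely generated as a $\Lambda$-module by the tuples $[\lambda_0,\ldots,\lambda_{n-1},s(\lambda_{n-1})]$. The only cosmetic difference is that you package this freeness as a direct-sum-of-representables decomposition plus Yoneda, whereas the paper constructs the splitting of a surjection $\eta:\A\to P^n$ directly by choosing preimages of those generators and extending equivariantly.
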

\begin{proof}
We begin by showing that $P^n$ is projective for all $n$.  To this end, let $\eta: \A \to P^n$ be a surjective morphism of $\Lambda$-modules.
For each $n+1$-tuple of the form $(\lambda_0, \ldots, \lambda_{n-1}, s(\lambda_{n-1}))$, choose an element $a_{(\lambda_0, \ldots, \lambda_{n-1}, s(\lambda_{n-1}))} \in \A$
 such that \[\eta(a_{(\lambda_0, \ldots, \lambda_{n-1}, s(\lambda_{n-1}))})= [\lambda_0, \ldots, \lambda_{n-1}, s(\lambda_{n-1})].\]  Define a morphism $\xi: P^n \to \A$ of $\Lambda$-modules by defining $\xi$ on generators of the form $[\lambda_0, \ldots, \lambda_{n-1}, s(\lambda_{n-1})]$ by
\[\xi([\lambda_0, \ldots, \lambda_{n-1}, s(\lambda_{n-1})])= a_{(\lambda_0, \ldots, \lambda_{n-1}, s(\lambda_{n-1}))};\]
then we extend $\xi$ to all of $P^n$ so that $\xi$ becomes a morphism of $\Lambda$-modules.  In other words, if $(\lambda_0, \ldots, \lambda_n)$ is an arbitrary element of $\Lambda^{*(n+1)}$, we have 
\[\xi( [\lambda_0, \ldots, \lambda_n] ) := \A(\lambda_n)(\xi([\lambda_0, \ldots, \lambda_{n-1}, s(\lambda_{n-1})])) = \A(\lambda_n)(a_{(\lambda_0, \ldots, \lambda_{n-1}, s(\lambda_{n-1}))}).\]
By construction, $\eta \circ \xi = id$ as morphisms of $\Lambda$-modules.  Thus, $P^n$ is projective, as claimed.

Now, we show that $\{P^n, d_n\}_{n\in \N}$ is  a resolution of $\Z^\Lambda$.  Since $d_n \circ d_{n+1} = 0$ for all $n$, and $d_0: P^0 \to \Z^\Lambda$ is clearly onto, it suffices to show that $\ker d_n \subseteq \text{Im } d_{n+1}$.  

To that end, we construct a contracting homotopy $(s_n)_{n\in \N}$ where $s_n: P_n \to P_{n+1}$ is a natural transformation for each $n$.  
Fix $v \in \Lambda^0$ and define $s_{n} = s_{n, v}: P^n_v \to P^{n+1}_v$ on generators by
\[ 
s_{n}([\lambda_0, \ldots, \lambda_n]) = (-1)^{n+1} [\lambda_0, \ldots, \lambda_n, s(\lambda_n)],
\]
{and extend $\Z$-linearly.}
{We also define $s_{-1}: \Z^\Lambda_v \to P^0_v$ by $s_{-1}([1]) = [v]$.}
Although $s_n$ does not {commute with the action of $\Lambda$, and hence fails to} determine a morphism of $\Lambda$-modules, {for $n \geq 1$} we have 
\begin{align*}
(d_{n+1} \circ s_n + s_{n-1} \circ d_n )& ([\lambda_0, \ldots, \lambda_n]) = d_{n+1}( (-1)^{n+1} [\lambda_0, \ldots, \lambda_n, s(\lambda_n)]) \\
& \quad + s_{n-1}\left( [\lambda_1, \ldots, \lambda_n] + \sum_{j=1}^n (-1)^j [\lambda_0, \ldots, \lambda_{j-1} \lambda_j, \ldots, \lambda_n] \right) \\
&= (-1)^{n+1} ( [\lambda_1, \ldots, \lambda_n, s(\lambda_n)] + (-1)^{n+1} [\lambda_0, \ldots, \lambda_n]  \\
& \qquad  + \sum_{j=1}^n (-1)^j [\lambda_0, \ldots, \lambda_{j-1} \lambda_j, \ldots, \lambda_n, s(\lambda_n) ]\\
& \quad + (-1)^n [\lambda_1, \ldots, \lambda_n, s(\lambda_n)]   )\\
& \qquad + \sum_{j=1}^n (-1)^{j+n} [\lambda_0, \ldots, \lambda_{j-1} \lambda_j, \ldots, \lambda_n, s(\lambda_n)]  \\
&= [\lambda_0, \ldots, \lambda_n].
\end{align*}
{If $n=0$, we compute 
\begin{align*}
(d_1 \circ s_0 + s_{-1} \circ d_0)([\lambda]) & = -d_1([\lambda, s(\lambda)]) + [s(\lambda)] = -[s(\lambda)] + [\lambda] - [s(\lambda)] \\
&= [\lambda].
\end{align*}
Thus, if $a \in \ker d_n$, we have $a = (d_{n+1} \circ s_n + s_{n-1} \circ d_n )(a)  \in \text{Im } d_{n+1}.$}
Since $\ker d_n \subseteq \text{Im } d_{n+1}$ for all $n$, the projective complex $\{ P^n, d_n\}_{n\in \N}$ is in fact a resolution of $\Z^\Lambda$,  as claimed.
\end{proof}
\begin{defn}
\label{def:module-cohlogy}
Let $\Lambda$ be a small category and let $\A$ be a $\Lambda$-module. We define the \emph{cohomology groups of $\Lambda$ with coefficients in $\A$} to be the cohomology groups of the complex $\Hom_\Lambda(P^*, \A)$, where the boundary map $\delta_n: \Hom_\Lambda(P^n, \A) \to \Hom_\Lambda(P^{n+1}, \A)$ is $\delta_n(c) := c \circ d_{n+1}$.  
 
 In more detail, an \emph{$n$-cochain} is a morphism of $\Lambda$-modules $c: P^n \to \A$, and an \emph{$n$-cocycle} is an $n$-cochain $c$ such that $\delta_n(c) := c \circ d_{n+1} = 0$.   
We say $c$ is an \emph{$n$-coboundary} if $c = {\delta_{n-1}}(b)$ for some $b \in \Hom_\Lambda(P^{n-1}, \A)$.


{Observe that $\Hom_\Lambda(P^n, \A)$ is an abelian group for all $n$.} Writing ${Z}^n(\Lambda, \A)$ for the subgroup of $n$-cocycles, and ${B}^n(\Lambda, \A)$ for the subgroup of coboundaries,  the cohomology of $\Lambda$ with coefficients in $\A$ is 
\begin{equation}{H}^n(\Lambda, \A) := \frac{{Z}^n(\Lambda, \A)}{{B}^n(\Lambda, \A)}.
\label{eq:kgraph-coh}
\end{equation}
\end{defn}

We now compare Definition \ref{def:module-cohlogy} with 
the \emph{categorical cohomology} of $\Lambda$ introduced in Definition 3.5 of \cite{kps-twisted}.  
Although the  categorical cohomology was originally defined in a slightly more restrictive setting than the one we present in Definition \ref{defn:categorical} below, Definition \ref{defn:categorical} agrees with the above-cited Definition 3.5 when the module is a constant abelian group.
Moreover (cf. Proposition \ref{prop:same-coh}) the categorical cohomology groups of Definition \ref{defn:categorical} agree with the cohomology groups of Definition \ref{def:module-cohlogy} above.

\begin{defn}
\label{defn:categorical}
Let $\Lambda$ be a small category and let $\A$ be a $\Lambda$-module. The group of \emph{categorical $n$-cochains} on $\Lambda$ taking values in $\A$ is 
\[\tilde{C}^n(\Lambda, \A) := \{ c: \Lambda^{*n} \to \A \mid c(\lambda_1, \ldots, \lambda_n) \in \A_{s(\lambda_n)}\}.\]

We define $\tilde{\delta}^n: \tilde{C}^n(\Lambda, \A) \to \tilde{C}^{n+1}(\Lambda, \A)$ by 
\begin{align*}
\tilde{\delta^n}(c)(\lambda_0, \ldots, \lambda_{n}) &= c(\lambda_1, \ldots, \lambda_n) + \sum_{i=1}^{n} (-1)^i c(\lambda_0, \ldots, \lambda_{i-1} \lambda_i, \ldots, \lambda_n)\\ & \qquad + (-1)^{n-1} \A(\lambda_n)( c(\lambda_0, \ldots, \lambda_{n-1})) 
\end{align*}
if $n \geq 1$; we set $\tilde{\delta^0}(c)(\lambda) = c(s(\lambda)) - \A(\lambda)(c(r(\lambda))).$

A \emph{categorical $\A$-valued $n$-cocycle} on $\Lambda$ is then an element of $ \tilde{Z}^n(\Lambda, \A):= \ker \tilde{\delta}^{n+1}$; we call $\tilde{B}^n(\Lambda, \A):=\text{Im}\, \tilde{\delta}^n$ the group of \emph{categorical $\A$-valued $n$-coboundaries}. 

One easily checks (cf.~Lemma 3.4 of \cite{kps-twisted}) that $(\tilde{C}^*(\Lambda, \A), \tilde{\delta}^*)$ is a cochain complex; thus, we can define the \emph{categorical cohomology} $\tilde{H}^*(\Lambda, \A)$ of $\Lambda$ with coefficients in $\A$ to be the cohomology of this complex, namely,
\[
\tilde{H}^n(\Lambda, \A) := \tilde{Z}^n(\Lambda, \A)/\tilde{B}^{n-1}(\Lambda, \A).
\]
\end{defn}

\begin{rmk}
{In contrast to Definition \ref{def:module-cohlogy}, 
there is   no mention of equivariance in Definition \ref{defn:categorical} above, because we have not defined an action of $\Lambda$ on $\Lambda^{*n}$.  In other words, categorical cochains and cocycles are merely set maps satisfying the conditions in Definition \ref{defn:categorical}.}
We also observe that, unlike in \cite{kps-twisted}, we do not require that our cochains and cocycles be normalized.  However, every categorical cocycle (in the sense of Definition \ref{defn:categorical}) is cohomologous to a normalized cocycle, so the cohomology groups of Definition \ref{defn:categorical} agree with those of \cite{kps-twisted} Definition 3.5 in the case when $\Lambda$ is a $k$-graph (see Definition \ref{def:kgraph} below) and $\A = A^\Lambda$ for an abelian group $A$.
\end{rmk}

\begin{prop}
For any small category $\Lambda$ and any $\Lambda$-module $\A$, the map $\zeta^n: \Hom_\Lambda(P^n, \A) \to \tilde{C}^n(\Lambda, \A)$ given by 
\[\zeta^n f(\lambda_1, \ldots, \lambda_n) = f([\lambda_1, \ldots, \lambda_n, s(\lambda_n)])\]
induces an isomorphism ${H}^*(\Lambda, \A) \cong \tilde{H^*}(\Lambda, \A)$.
\label{prop:same-coh}
\end{prop}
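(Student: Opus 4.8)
The plan is to establish the stronger statement that $\zeta := (\zeta^n)_{n \in \N}$ is an isomorphism of cochain complexes from $\big(\Hom_\Lambda(P^*, \A), \delta\big)$ to $\big(\tilde{C}^*(\Lambda, \A), \tilde{\delta}\big)$; since an isomorphism of cochain complexes induces isomorphisms on all cohomology groups, this yields the assertion. Thus there are exactly two things to verify: that each $\zeta^n$ is an isomorphism of abelian groups, and that $\zeta$ intertwines the two differentials.

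For the first point, the essential observation is that, as a right $\Lambda$-module, $P^n$ is freely generated by the elements $[\lambda_0, \ldots, \lambda_{n-1}, s(\lambda_{n-1})]$, one for each $(\lambda_0, \ldots, \lambda_{n-1}) \in \Lambda^{*n}$: partitioning the basis $\Lambda^{*(n+1)}v$ of $P^n_v$ according to the first $n$ entries exhibits $P^n$ as a direct sum, indexed by $\Lambda^{*n}$, of representable right $\Lambda$-modules, and $\zeta^n$ is (componentwise) the resulting Yoneda identification $\Hom_\Lambda(P^n, \A) \cong \prod_{(\lambda_0, \ldots, \lambda_{n-1}) \in \Lambda^{*n}} \A_{s(\lambda_{n-1})} = \tilde{C}^n(\Lambda, \A)$. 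One may also argue directly: $\zeta^n$ is injective because every generator $[\nu_0, \ldots, \nu_n]$ of $P^n$ equals $P^n(\nu_n)\big([\nu_0, \ldots, \nu_{n-1}, s(\nu_{n-1})]\big)$, so any $\Lambda$-module morphism vanishing on all the $[\nu_0, \ldots, \nu_{n-1}, s(\nu_{n-1})]$ vanishes identically; and $\zeta^n$ is surjective because, given $c \in \tilde{C}^n(\Lambda, \A)$, the prescription $f([\nu_0, \ldots, \nu_n]) := \A(\nu_n)\big(c(\nu_0, \ldots, \nu_{n-1})\big)$ extends $\Z$-linearly to a well-defined morphism of $\Lambda$-modules (functoriality of $\A$ is precisely what makes $f$ compatible with the $\Lambda$-action, exactly as in the proof of Proposition \ref{prop:proj-res}), and $\zeta^n f = c$ since $\A(s(\nu_{n-1})) = \mathrm{id}$.

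For the second point, I would verify $\zeta^{n+1} \circ \delta_n = \tilde{\delta}^n \circ \zeta^n$ by a direct expansion. For $c \in \Hom_\Lambda(P^n, \A)$ and $(\lambda_0, \ldots, \lambda_n) \in \Lambda^{*(n+1)}$, we have $\zeta^{n+1}(\delta_n c)(\lambda_0, \ldots, \lambda_n) = c\big(d_{n+1}[\lambda_0, \ldots, \lambda_n, s(\lambda_n)]\big)$. Expanding $d_{n+1}$ produces the term $[\lambda_1, \ldots, \lambda_n, s(\lambda_n)]$, the $n$ terms $(-1)^i [\lambda_0, \ldots, \lambda_{i-1}\lambda_i, \ldots, \lambda_n, s(\lambda_n)]$ for $1 \le i \le n$, and the one remaining term $(-1)^{n+1}[\lambda_0, \ldots, \lambda_{n-1}, \lambda_n s(\lambda_n)] = (-1)^{n+1}[\lambda_0, \ldots, \lambda_{n-1}, \lambda_n]$. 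Applying $c$: on the first $n+1$ of these (which are already generators of the form ``a tuple followed by the source of its last entry'') $c$ reproduces, by the definition of $\zeta^n$, the corresponding terms of $\tilde{\delta}^n(\zeta^n c)(\lambda_0, \ldots, \lambda_n)$; and on the last one, using $[\lambda_0, \ldots, \lambda_{n-1}, \lambda_n] = P^n(\lambda_n)\big([\lambda_0, \ldots, \lambda_{n-1}, s(\lambda_{n-1})]\big)$ together with the fact that $c$ is a $\Lambda$-module morphism, we obtain $(-1)^{n+1}\A(\lambda_n)\big(\zeta^n c(\lambda_0, \ldots, \lambda_{n-1})\big)$. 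Since $(-1)^{n+1} = (-1)^{n-1}$, this matches the equivariance term of $\tilde{\delta}^n(\zeta^n c)$, proving the identity for $n \ge 1$; the case $n = 0$ (involving $d_0$ and $\tilde{\delta}^0$) is an analogous but shorter computation. Combining the two points, $\zeta$ is a degreewise isomorphism of cochain complexes and hence induces isomorphisms $H^n(\Lambda, \A) \cong \tilde{H}^n(\Lambda, \A)$ for every $n$.

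The whole argument is essentially bookkeeping; the one step that genuinely requires care is the cochain-map identity, where one must correctly single out the unique summand of $d_{n+1}$ that collapses to an $(n+1)$-tuple (rather than producing an $(n+2)$-tuple with a source object appended), convert it into the equivariance term using the $\Lambda$-linearity of $c$, and check that its sign $(-1)^{n+1}$ agrees with the sign $(-1)^{n-1}$ appearing in $\tilde{\delta}^n$.
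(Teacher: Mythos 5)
Your proposal is correct and follows essentially the same route as the paper: the explicit inverse you construct for surjectivity, $f([\nu_0,\ldots,\nu_n]) := \A(\nu_n)\bigl(c(\nu_0,\ldots,\nu_{n-1})\bigr)$, is precisely the paper's map $\eta^n$, and the key identity $c\bigl(P^n(\lambda_n)[\lambda_0,\ldots,\lambda_{n-1},s(\lambda_{n-1})]\bigr) = \A(\lambda_n)\bigl(c([\lambda_0,\ldots,\lambda_{n-1},s(\lambda_{n-1})])\bigr)$ is the same fact the paper invokes both for $\eta^n\circ\zeta^n = \mathrm{id}$ and for the cochain-map identity. Your sign check $(-1)^{n+1}=(-1)^{n-1}$ and the isolation of the one collapsing summand of $d_{n+1}$ are exactly the details the paper leaves as ``a routine computation,'' so the argument is complete.
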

\begin{proof}
We will show that $\zeta^n$ is an isomorphism of abelian groups by exhibiting an inverse $\eta^n: \tilde{C^n}(\Lambda, \A) \to \Hom_\Lambda(P^n, \A)$, namely,
\[\eta^n c( [\lambda_0, \ldots, \lambda_n]) = \A(\lambda_n)(c(\lambda_0, \ldots, \lambda_{n-1})).\]

To see that $\eta^n c \in \Hom_\Lambda(P^n, \A)$, we note that, whenever $r(\lambda) = s(\lambda_n)$,
\begin{align*}
\A(\lambda) \eta^n c([\lambda_0, \ldots, \lambda_n])&= \A(\lambda)(\A(\lambda_n)(c(\lambda_0, \ldots, \lambda_{n-1}))) = \eta^n c([\lambda_0, \ldots, \lambda_n \lambda]) \\
& = \eta^n c(P^n(\lambda)[\lambda_0, \ldots, \lambda_n] ).
\end{align*}

A straightforward computation shows that $\zeta^n \circ \eta^n = id$; to see that $\eta^n \circ \zeta^n = id$, we rely on the fact that  for any $f \in \Hom_\Lambda(P^n, 
\A)$ and any $(\lambda_0, \ldots, \lambda_n) \in \Lambda^{*(n+1)}$, we have 
\begin{align*}
\A(\lambda_n)(f([\lambda_0, \ldots, \lambda_{n-1}, s(\lambda_{n-1})])) &= f(P^n(\lambda_n)[\lambda_0, \ldots, \lambda_{n-1}, s(\lambda_{n-1})]) \\
&= f([\lambda_0, \ldots, \lambda_{n-1}, \lambda_n]).
\end{align*}

This fact also underlies the computation that $\zeta^{n+1} \circ {\delta_n} = \tilde{\delta^n} \circ \zeta^n$.  In other words, $\zeta^*$ preserves cocycles and coboundaries, so $\zeta^*$ induces an isomorphism ${H^*}(\Lambda, \A) \cong \tilde{H^*}(\Lambda, \A)$.
\end{proof}

\begin{rmk}
Applying Proposition \ref{prop:same-coh} to the case when $\Lambda$ is a higher-rank graph and $\A = A^\Lambda$ for an abelian group $A$ establishes that the cohomology of $\Lambda$, as defined in Definition \ref{def:module-cohlogy} above, agrees with the categorical cohomology  defined by Kumjian, Pask, and Sims in \cite{kps-twisted}.
\end{rmk}

\section{From $k$-graphs to groupoids and modules to sheaves}
\label{sec:kgraph-gpoid}
In this section we specialize from arbitrary small categories to the $k$-graphs {introduced by Kumjian and Pask in \cite{kp}} (see Definition \ref{def:kgraph} below), and their associated groupoids. 
We show that a module $\A$ over a $k$-graph $\Lambda$ gives rise to a sheaf $\shf{\A}$ over the associated groupoid $\G_\Lambda$; in fact, Theorem \ref{thm:mod-sheaf-functor} establishes that the map $\A \mapsto \shf{\A}$ is an exact functor. 

Sheaves over a groupoid $\G$, or $\G$-sheaves, play a central role in both the continuous cocycle cohomology and the sheaf cohomology of $\G$.  We discuss the continuous cocycle cohomology in this section, and postpone the discussion of sheaf cohomology to Section \ref{sec:equiv-sheaf}.  To be precise, we explain in Definitions \ref{def:set-sheaf} and \ref{def:Pn}
the construction of the $\G$-sheaves relevant to the continuous cocycle cohomology, and Proposition \ref{prop:cocycle-coh} establishes that the continuous cocycle cohomology of $\G$ can be computed from these sheaves.

For the reader who is not familiar with $k$-graphs, \'etale groupoids, and groupoid sheaves, we begin this section by defining these objects.  
Recall that $\N = \{0,1, 2, \ldots \}$ is a monoid under addition.  We regard  $\N^k$ as a category with one object generated by  $k$ commuting morphisms $e_1, \dots, e_k$.
\begin{defn}
\label{def:kgraph}(\cite{kp} Definition 1.1)
A \emph{higher-rank graph}, or {\em $k$-graph}, is a countable category $\Lambda$ equipped with a functor $d: \Lambda \to \N^k$ , such that $d$ satisfies the \emph{factorization property:} If a morphism $\lambda\in \Lambda$ satisfies $d(\lambda) = m+n$, then there exist unique $\mu, \nu \in \Lambda$ with $\lambda = \mu \nu$ and $d(\mu) = m, d(\nu) = n$.
\end{defn}


A fundamental example of a $k$-graph is the category $\Omega_k$, where $\text{Obj}\,\Omega_k = \N^k$ and $\text{Mor}\, \Omega_k = \{(m,n) \in \N^k \times \N^k: m \leq n\}$.  We have $r(m,n) = m, s(m,n) = n$, and composition is given by $(m,n)(n,p) = (m,p)$. The degree map $d: \Omega_k \to \N^k$ is given by $d(m,n) = n-m$.

For any $n \in \N^k$ {and any $k$-graph $\Lambda$}, we write $\Lambda^n = \{\lambda \in \Lambda: d(\lambda) = n\}$; 
we identify $\text{Obj} \, \Lambda = \Lambda^0$ and refer to elements of $\Lambda^0$ 
as  \emph{vertices}.  
If $v \in \Lambda^0$ we write 
\[
v\Lambda^n = \{\lambda\in \Lambda: r(\lambda) = v \text{ and } d(\lambda) = n\}.
\]

A $k$-graph $\Lambda$ is \emph{row-finite} if for all $m \in \N^k$ and all $v \in \Lambda^0$, we have $ |v\Lambda^m| < \infty$; we say $\Lambda$ has \emph{no sources} if $0 < |v\Lambda^m|$ for all $m, v$.  We will be exclusively concerned with row-finite $k$-graphs with no sources in this paper, since these are the $k$-graphs from which one can most easily build groupoids.\footnote{The row-finite and source-free requirement is slightly stronger than is strictly necessary in order to associate a groupoid to a $k$-graph; see \cite{farthing} for a more general groupoid approach to $k$-graphs.}

A \emph{groupoid} $\G$ is a small category with inverses.  
When discussing groupoids, as with $k$-graphs, we will use the arrows-only picture of category theory, identifying the objects $\G\z$ of $\G$ with their identity morphisms.  We will also often refer to the composition of morphisms in a groupoid $\G$ as ``multiplication,'' in line with our tendency to think of a groupoid as a generalization of a group, in which multiplication is defined only on a subset $\G\2$ of $\G \times \G$.

In this work, we will deal exclusively with 
\'etale groupoids $\G$; recall that $\G$ is \emph{\'etale} if {$\G$ has a locally compact Hausdorff topology with respect to which the multiplication and inverse operations are continuous, and }the range and source maps $r, s: \G \to \G\z$ are local homeomorphisms.

\begin{defn}(\cite{kp} Definition 2.1)
\label{def:groupoid}
For a  row-finite source-free $k$-graph $\Lambda$, the \emph{infinite path space} $\Lambda^\infty$ is the set of all $k$-graph morphisms $\Omega_k \to \Lambda$.  For $\lambda \in \Lambda$ write 

\[Z(\lambda) := \{x \in \Lambda^\infty: x(0,d(\lambda)) = \lambda\}\] and observe that the collection 
$\{ Z(\lambda) :  \lambda \in \Lambda\}$ forms a compact open basis for a topology on  $\Lambda^\infty$
For $p \in \N^k$, the map $\sigma^p: \Lambda^\infty \to \Lambda^\infty$ given by 
\[\sigma^p(x)(m,n) = x(m+p, n+p)\]
 is a local homeomorphism.
{Similarly, (cf.~Proposition 2.3 of \cite{kp})  for any $y \in \Lambda^\infty, \lambda \in \Lambda$ with $s(\lambda) = y(0)$, we define $\lambda y$ to be the unique path $x \in \Lambda^\infty$ such that $x(0,d(\lambda)) = \lambda$ and $\sigma^{d(\lambda)}(x) = y$. }
 
The groupoid $\G_\Lambda \subset  \Lambda^\infty \times \Z^k \times \Lambda^\infty$ associated to $\Lambda$ is 
\[
\G_\Lambda := \{(x, n, y): \exists \ j, \ell \in \N^k \text{ s.t. } j-\ell = n, \sigma^j(x) = \sigma^\ell(y)\}.
\]
We have $s(x,n,y) = (y, 0, y), r(x,n,y) = (x, 0,x)$, and composition given by $(x,n,y)(y, m,z)= (x, n+m, z)$.  Thus, we identify $\G_\Lambda\z $ with $ \Lambda^\infty$ via the embedding $\Lambda^\infty \ni x \mapsto (x, 0, x) \in \G_\Lambda\z$.

{Write $\Lambda *_s \Lambda = \{ (\lambda, \mu \in \Lambda \times \Lambda: s(\lambda) = s(\mu)\}.$} For each $(\lambda, \mu) \in \Lambda *_s \Lambda$, we define 
\[Z(\lambda, \mu) = \{(x,d(\lambda) - d(\mu), y): x(0, d(\lambda)) = \lambda, y (0, d(\mu)) = \mu\} \subseteq \G_\Lambda.\]
\end{defn}

{Proposition 2.8 of \cite{kp} establishes that} the sets $Z(\lambda,\mu)$ form a compact open basis for a locally compact Hausdorff topology on $\G_\Lambda$; in fact, this topology makes $\G_\Lambda$ into an \'etale groupoid.  

In what follows, we will discuss how to construct a sheaf $\shf{\A}$ over $\G_\Lambda$ from a $\Lambda$-module $\A$.  To this end, it will be helpful to use the \emph{espace \'etal\'e} picture of a sheaf.

\begin{defn}[\cite{hirzebruch} Definition I.2.1]
A \emph{sheaf}  of abelian groups over a topological space $X$ is a topological space $\mathscr{A}$ equipped with a local homeomorphism $\pi: \mathscr{A} \to X$ such that $\pi\inv(x)$ is an abelian group for all $x \in X$
and the fiberwise group operations  are continuous.
\end{defn}
Note that in particular there is a continuous zero section $X \to \mathscr{A}$, given by $x \mapsto 0_x$ 
where $0_x$ is the zero element in  $\mathscr{A}(x) := \pi\inv(x)$, the \emph{stalk} at $x$.

\begin{defn}[cf.~\cite{equiv-sheaf-coh} Definition 0.6]
\label{def:gpoid-sheaf}
Let $\G$ be an \'etale groupoid with unit space $\G\z$.  A  \emph{$\G$-sheaf} consists of a sheaf  of abelian groups, $\pi: \mathscr{A} \to \G\z$ over $\G\z$, equipped with  an isomorphism $\alpha_\gamma: \mathscr{A}(s(\gamma)) \to \mathscr{A}(r(\gamma))$ for each $\gamma \in \G$, such that:
\begin{itemize}
\item If $x \in \G\z$ then $\alpha_x = \text{id}$.
\item If $(\gamma_1, \gamma_2) \in \G\2$ then $\alpha_{\gamma_1} \circ \alpha_{\gamma_2} = \alpha_{\gamma_1 \gamma_2}$.
\item The map $\alpha: \G * \mathscr{A} \to \mathscr{A}$ given by $(\gamma, a) \mapsto \alpha_\gamma(a)$ is continuous. (We equip $\G * \mathscr{A} = \{(\gamma, a): s(\gamma) = \pi(a)\}$ with the subspace topology inherited from $\G \times \mathscr{A}$.)
\end{itemize}

Note that if $A$ is an abelian group one may form the constant sheaf $\shf{A}$ with fiber $\shf{A}(x) = A$ for all $x \in \G\z$ and with 
$\alpha_\gamma = \text{id}$ for all $\gamma \in \G$.

{ If $\mathscr{A}, \mathscr{B}$ are two $\G$-sheaves, we say a continuous map $f: \mathscr{A} \to \mathscr{B}$ is a \emph{morphism of $\G$-sheaves}, and write  $f \in \Hom_\G(\mathscr{A}, \mathscr{B})$, if:
\begin{itemize}
\item For any $x \in \G\z$, $f(a) \in \mathscr{B}(x)$ for all $a \in \mathscr{A}(x)$, and the induced map $\mathscr{A}(x) \to \mathscr{B}(x)$ is a homomorphism.
\item For any $(\gamma, a) \in \G * \mathscr{A}$, 
\[f(\alpha_\gamma (a)) = \beta_\gamma ( f(a))
\]
where $\beta$ is the action of $\G$ on $B$.
\end{itemize}}
\end{defn}
We often abuse notation and use the same symbol $\alpha$ to denote  the action of $\G$ on different $\G$-sheaves
and sometimes write simply $\gamma \cdot a$ for $\alpha_\gamma(a)$.
\begin{rmk}
\label{rmk:category-gsheaves}
The category of $\G$-sheaves $\S_\G$ is an abelian category (see \cite[Proposition 0.7]{equiv-sheaf-coh}).  
Subobjects and quotient objects are defined as in the category of sheaves with the additional requirement 
that the embeddings and quotient maps be  morphisms of $\G$-sheaves.  
The direct sum of two $\G$-sheaves $\mathscr{A}, \mathscr{B}$, denoted $\mathscr{A} \oplus \mathscr{B}$,
will also be regarded as a $\G$-sheaf when endowed with the diagonal action. \end{rmk}

\begin{rmk}
Since an \'etale groupoid $\G$ is a small category, we can also consider right (or left) $\G$-modules as in Definition \ref{def:module}.  Comparing Definition \ref{def:module} and Definition \ref{def:gpoid-sheaf}, we see that a $\G$-sheaf is in particular a left $\G$-module; but we require in addition that the action map  
$\alpha: \G * \mathscr{A} \to \mathscr{A}$ be continuous and that $\alpha_\gamma$ be an isomorphism for all $\gamma \in \G$.
\label{rmk:module-sheaf-compare}
\end{rmk}

Recall that a functor between abelian categories is said to be \emph{exact} if it preserves exact sequences. 

\begin{thm}
\label{thm:mod-sheaf-functor}
Let $\Lambda$ be a $k$-graph and let $\A$ be a $\Lambda$-module.  For each $x \in \Lambda^\infty$, define \[\shf{\A}(x) := \varinjlim 
(\A_{x(p)}, \A(x(p,q)) ).\]
where  $(p, q) \in \Omega_k$.\footnote{Note that $x(p,q)$ is the morphism in $\Lambda$ between the vertex $x(q) \in \Lambda^0 = \text{Obj}(\Lambda)$ and the vertex $x(p)$.  Since $\A$ is a contravariant functor, $\A(x(p,q)): \A_{x(p)} \to \A_{x(q)}$.}

There is a topology on $\shf{\A} := \bigsqcup_{x \in \Lambda^\infty} \shf{\A}(x)$ which makes $\shf{\A}$ into a $\G_\Lambda$-sheaf.  
Let $\eta \in \Hom_\Lambda(\A, \B)$;   
then for any $x \in \Lambda^\infty$,
\[
\varinjlim_{(p,q)\in \Omega_k}  \eta \circ \A(x(p,q)) = \varinjlim_{(p,q) \in \Omega_k} \B(x(p,q)) \circ \eta.
\]
We consequently obtain a well-defined element $\shf{\eta} \in  \Hom_{\G_\Lambda}(\shf{\A}, \shf{\B})$
such that  for any $[a] \in \shf{\mathcal{A}}(x)$
\[
\shf{\eta}([a]) := [\eta(a)]
\]
Moreover, the assignment $\A \mapsto \shf{\A}$ defines an  exact functor and the  induced map 
$\Hom_\Lambda(\A, \B) \to \Hom_{\G_\Lambda}(\shf{\A}, \shf{\B})$ is a homomorphism.
\end{thm}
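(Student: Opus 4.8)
The plan is to build $\shf{\A}$ and its topology concretely from the espace étalé picture, verify the groupoid action axioms, and then check functoriality and exactness fiberwise, exploiting the fact that both the source category $\mathfrak{M}_\Lambda$ and the target category $\mathfrak{S}_{\G_\Lambda}$ are abelian and that filtered colimits of abelian groups are exact. First I would fix, for each $x \in \Lambda^\infty$, the directed set of morphisms $x(p,q)$ with $(p,q) \in \Omega_k$ — directedness follows from the fact that $\N^k$ is directed under $\leq$ — and form the colimit $\shf{\A}(x) = \varinjlim \A_{x(p)}$, writing $[a]$ for the class of $a \in \A_{x(q)}$. For the topology: given an open set $Z(\lambda)$ in $\Lambda^\infty$ and $a \in \A_{s(\lambda)}$, I would declare the set $\{\,[a]_x : x \in Z(\lambda)\,\}$ (interpreting $a \in \A_{s(\lambda)} = \A_{x(d(\lambda))}$ for each such $x$) to be a basic open set, and take the topology generated by these. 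One checks that the projection $\pi\colon \shf{\A} \to \Lambda^\infty$ is a local homeomorphism, that stalkwise addition and negation are continuous, and that the $Z(\lambda)$-neighborhoods are compatible under refinement $\lambda \mapsto \lambda\nu$ via the maps $\A(\nu)$.

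Next I would define the $\G_\Lambda$-action. For $\gamma = (x,n,y) \in \G_\Lambda$ choose $j,\ell \in \N^k$ with $j - \ell = n$ and $\sigma^j(x) = \sigma^\ell(y) =: z$; then $\shf{\A}(x)$ and $\shf{\A}(y)$ are both canonically isomorphic to $\varinjlim$ over the cofinal tail indexed by $z$, which gives $\alpha_\gamma\colon \shf{\A}(s(\gamma)) \to \shf{\A}(r(\gamma))$. I would check independence of the choice of $(j,\ell)$ (using that any two choices differ by adding a common element of $\N^k$, and the colimit maps are compatible), that $\alpha_x = \mathrm{id}$ for $x \in \G_\Lambda^{(0)}$, that $\alpha_{\gamma_1}\alpha_{\gamma_2} = \alpha_{\gamma_1\gamma_2}$, and that $\alpha\colon \G_\Lambda * \shf{\A} \to \shf{\A}$ is continuous — the last point reduces to the basic open sets $Z(\lambda,\mu)$ and the compatibility of the colimit structure maps, and is the place where the combinatorics of the factorization property enters. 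Since each $\alpha_\gamma$ is invertible with inverse $\alpha_{\gamma^{-1}}$, this exhibits $\shf{\A}$ as a genuine $\G_\Lambda$-sheaf.

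For the functoriality statement, given $\eta \in \Hom_\Lambda(\A,\B)$ the naturality square $\eta \circ \A(x(p,q)) = \B(x(p,q)) \circ \eta$ holds on the nose for each $(p,q)$ because $\eta$ is a natural transformation; passing to the colimit gives a well-defined $\shf{\eta}\colon \shf{\A}(x) \to \shf{\B}(x)$, $[a] \mapsto [\eta(a)]$, which is a homomorphism on stalks, continuous (it sends the basic open set attached to $(\lambda,a)$ into the one attached to $(\lambda,\eta(a))$), and $\G_\Lambda$-equivariant because $\alpha_\gamma$ on both sheaves is built from the same colimit identifications. Functoriality ($\shf{\mathrm{id}} = \mathrm{id}$, $\shf{\eta'\circ\eta} = \shf{\eta'}\circ\shf{\eta}$) and additivity of $\eta \mapsto \shf{\eta}$ are immediate from the formula $[a]\mapsto[\eta(a)]$. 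Finally, for exactness: a sequence $\A \to \B \to \C$ of $\Lambda$-modules is exact iff it is exact at every vertex $v$, and since for fixed $x$ the stalk $\shf{(-)}(x)$ is a filtered colimit of the vertex-groups along the cofinal system $\{x(p)\}$, and filtered colimits of abelian groups preserve exactness, the sequence $\shf{\A}(x) \to \shf{\B}(x) \to \shf{\C}(x)$ is exact; by Remark \ref{rmk:category-gsheaves}, exactness of a sequence of $\G_\Lambda$-sheaves is checked stalkwise, so $\shf{\A} \to \shf{\B} \to \shf{\C}$ is exact.

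The main obstacle I expect is not any single algebraic identity but the topological verifications — showing that the proposed basis genuinely defines a topology making $\pi$ a local homeomorphism with continuous fiberwise operations, and especially that the action map $\alpha\colon \G_\Lambda * \shf{\A} \to \shf{\A}$ is continuous. These require carefully tracking how a germ $[a]$ represented over $Z(\lambda)$ is carried by $\alpha_{(x,n,y)}$ to a germ represented over some $Z(\mu)$, which means unwinding the relation $\sigma^j(x) = \sigma^\ell(y)$ together with the factorization property to produce the right representing morphism; this is where I would spend the bulk of the careful argument.
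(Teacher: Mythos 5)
Your proposal is correct and follows essentially the same route as the paper: the same basic open sets $S_{a,\lambda}=\{[a]_x : x\in Z(\lambda)\}$, the same definition of the $\G_\Lambda$-action via the canonical identification of the stalks at $x$ and $y$ through their common tail $\sigma^j(x)=\sigma^\ell(y)$, and the same exactness argument (filtered colimits of abelian groups are exact, and exactness of $\G_\Lambda$-sheaves is checked stalkwise). The topological verifications you flag as the main work (the basis axiom, continuity of the fiberwise operations, and continuity of $\alpha$ via the $Z(\lambda,\mu)$ basis) are exactly the checks the paper carries out in detail.
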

\begin{proof}
Write $\varphi^x_p$ for the natural map $\A_{x(p)} \to \shf{\A}(x)$; the structure of the inductive limit implies that 
every element of $\shf{\A}(x)$ is of the form $\varphi^x_p(a)$ for some $p \in \N^k, a \in \A_{x(p)}$.
%
Equip $\shf{\A} = \bigsqcup_{x \in \Lambda^\infty} \shf{\A}(x)$ with the topology generated by the sets 
\begin{equation}
\label{eq:sheaf-open-sets}
\{S_{a, \lambda}\}_{\lambda \in \Lambda, a \in \A_{s(\lambda)}}, \text{ where } S_{a, \lambda} = \{\varphi^y_q(a): y(0, q) = \lambda\}.
\end{equation}
To see that these sets do indeed form a basis, suppose that $\beta \in S_{a, \lambda} \cap S_{b, \mu}$.  Then $\beta = \varphi^x_q(a) = \varphi^x_p(b) \in \shf{\A}(x)$ for some $x \in \Lambda^\infty$ such that $x(0,q) = \lambda$ and $x(0,p) = \mu$.  The definition of the inductive limit implies the existence of $m \geq p, q$  and $\lambda', \mu' \in \Lambda$ such that 
\[x(0,m) = \lambda \lambda' = \mu \mu' \text{ and } \A(\lambda')(a) = \A(\mu')(b) \in \A_{x(m)}.\]
Thus, $S_{\A(\lambda')(a), x(0,m)} = S_{\A(\mu')(b), x(0,m)}$; moreover, since $\varphi^x_p(b) = \beta = \varphi^x_q(a)$, 
\begin{align*}
\beta &= \varphi^x_{m}(\A(\lambda')(a)) = \varphi^x_{m}(\A(\mu')(b)) \\
\Rightarrow \beta &\in S_{\A(\lambda')(a), x(0,m)} = S_{\A(\mu')(b), x(0,m)} \subseteq S_{a, \lambda} \cap S_{b, \mu}.
\end{align*}
It follows that the sets $\{S_{a, \lambda}\}$ form a basis for a topology on $\shf{\A}$.
%

Note that the natural projection $\pi: \bigsqcup_{x\in \Lambda^\infty}\shf{\A}(x) \to \Lambda^\infty$ satisfies $\pi(S_{a,\lambda}) = Z(\lambda)$, so this topology does indeed make $\pi$ into a surjective local homeomorphism.  Moreover, the group operations are continuous in this topology: Suppose that $\varphi_q^y(a) - \varphi_p^y(b) \in S_{c, \lambda}$.  {We will show that there exist basic open sets $S_{m_a, \mu}, S_{m_b, \mu}$ such that 
\[\varphi^y_q(a) \in S_{m_a, \mu}, \ \varphi^y_p(b) \in S_{m_b, \mu}, \ \text{ and } \{ f-g: (f , g ) \in S_{m_a, \mu}*_\pi S_{m_b, \mu}\} \subseteq S_{c, \lambda}.\]}

Given $\varphi_q^y(a) - \varphi_p^y(b) \in S_{c, \lambda}$, there exists $ m' \geq p, q, d(\lambda)$ such that 
\[\varphi_q^y(a) - \varphi_p^y(b) = \varphi_{m'}^y(\A(y(q,m'))a) - \varphi_{m'}^y(\A(y(p,m'))b) \in S_{\A(y(\ell, m'))c, \lambda y(\ell, m')}.\]
 It follows that $\varphi_{m'}^y(\A(y(q,m'))a - \A(y(p,m'))b) = \varphi_{m'}^y(\A(y(\ell, m'))c).$
The structure of the direct limit then implies the existence of $m \geq m'$ such that 
\[\A(y(q,m))a - \A(y(p,m))b = \A(y(\ell, m))c.\]
Write $m_a := \A(y(q,m))a, \ m_b: = \A(y(p,m)) b$, and write $\mu = y(0,m)$.  Then $\varphi_q^y(a) \in S_{m_a, \mu}$ and $\varphi_p^y(b) \in S_{m_b, \mu}$. In fact, we claim that for any $f \in S_{m_a, \mu}, g \in S_{m_b, \mu}$ that both live in the same fiber $\shf{\A}(z)$, we have $f-g \in S_{\A(y(\ell, m))c, \mu} \subseteq  S_{c, \lambda}$.

To see this, choose $f, g$ as above, so $f= \varphi_m^z(m_a) ,g=\varphi_m^z(m_b)$. Consequently, 
\[f-g=\varphi_m^z(m_a-m_b)=\varphi_m^z(\A(y(\ell, m))c) \in S_{\A(y(\ell, m))c, \mu} \subseteq S_{c, \lambda},\]
so the fiber-wise group operations are continuous.

To make $\shf{\A}$ into a $\G_\Lambda$-sheaf, we equip it with the following  action $\alpha$ of $\G_\Lambda$.  If $(x, n, y) \in \G_\Lambda$ and $\shf{b} \in \shf{\A}(y)$,  write $\shf{b} = \varphi^y_q(b)$ for $b \in \A_{y(q)}$.  Without loss of generality, suppose that $q$ is large enough so that $y(q) = x(p)$ (where $n = p-q$); then, we define 
\begin{equation}
\label{eq:GLambda-action}
\alpha_{(x,n,y)}(\shf{b}) = (x,n,y) \cdot \shf{b} := \varphi^x_p(b).
\end{equation}

To see that $\alpha$ is continuous, suppose that $(x_i, n_i, y_i) \to (x,n,y) \in \G_\Lambda$ and that for all $i$, we have $\shf{b}_i \in \shf{\A}(y_i)$ with $\shf{b}_i \to \shf{b} \in \shf{\A}(y)$.  Suppose also that $(x,n,y) \cdot \shf{b} \in S_{a,\lambda}$; in other words, $x \in Z(\lambda)$ and there exists $q \in \N^k$ such that $y(q) = s(\lambda)$ and $\shf{b} = \varphi^y_q(a)$ for some $a\in \A_{y(q)} = \A_{s(\lambda)}$.

Writing $\mu = y(0,q)$, we have $(x,n,y) \in  Z(\lambda, \mu)$ and $\shf{b} \in S_{a,\mu}$.  Since $(x_i, n_i, y_i) \to (x,n,y)$ and $\shf{b}_i \to \shf{b}$, the fact that $\{Z(\lambda, \mu)\}_{\lambda, \mu}$ is a  basis for $\G_\Lambda$ implies that, for $i$ large enough, we also have $(x_i, n_i, y_i) \in Z(\lambda, \mu)$ and $\shf{b}_i \in S_{a,\mu}$, so $\shf{b}_i = \varphi^{y_i}_{q}(a)$.  It follows that
\[(x_i, n_i,y_i) \cdot \shf{b}_i = \varphi^{x_i}_{d(\lambda)}(a) \in S_{a,\lambda}\]
for large enough $i$.  Since $S_{a,\lambda}$ was an arbitrary neighborhood of $(x,n,y) \cdot \shf{b}$, the action $( (x,n,y), \shf{b}) \to \alpha_{(x,n,y)}(\shf{b})$ is continuous.

We now check the functoriality of the map $\A \mapsto \shf{\A}$.
{First, we check that a morphism $\eta: \A \to \B$ of $\Lambda$-modules  induces a continuous morphism of sheaves $\shf{\eta}:\shf{\A} \to \shf{\B}$ via the formula $\shf{\eta}[a] = [\eta(a)]$.  
In the notation used earlier in the proof, this formula becomes 
\[ \shf{\eta}(\varphi^y_q(a)) := \varphi^y_q(\eta(a)) \ \text{ for all } y \in \Lambda^\infty, \ a \in \A_{y(q)}.\]
Since $\eta$ is a natural transformation, we have $\eta \circ \A(y(p,q)) = \B(y(p,q))\circ \eta$ for all $y \in \Lambda^\infty$ and all $(p, q) \in \Omega_k$; this implies that $\shf{\eta} $ is well defined.
Moreover,  if $\shf{b}\in \shf{\eta}\inv(S_{b, \lambda})$, there exists $\mu \in \Lambda, y \in Z(\lambda \mu)$, $a \in \A_{s(\mu)}$ such that 
\[\shf{b} = \varphi^y_{d(\lambda \mu)}(a) \text{ and } \eta(a) = \B(\mu)(b).\]
In other words, $\shf{b} \in S_{a, \lambda \mu} $, and 
\begin{align*}
\shf{\eta}(S_{a,\lambda \mu})& = \{ \varphi^y_{d(\lambda \mu)}(\eta(a)): y \in Z(\lambda \mu)\} = \{\varphi^y_{d(\lambda \mu)}(\B(\mu)(b)): y \in Z(\lambda \mu)\}\\
&= \{ \varphi^y_{d(\lambda)}(b): y \in Z(\lambda \mu)\} \subseteq \shf{\B}\\
&\subseteq S_{b, \lambda}.
\end{align*}
 That is, every $\shf{b} \in \shf{\eta}\inv(S_{b, \lambda})$ has an open neighborhood $S_{a, \lambda \mu}$ which is contained in $\shf{\eta}\inv(S_{b,\lambda})$.  Consequently, $\shf{\eta}$ is continuous, as claimed.}

It follows immediately from the definitions  that $\shf{id} = id$ and that if $\eta, \rho$ are morphisms of $\Lambda$-modules, 
then $\shf{\eta \circ \rho} = \shf{\eta} \circ \shf{\rho}$.  So the assignment $\A \mapsto \shf{\A}$ is a functor.  
It is routine to check that the map $\Hom_\Lambda(\A, \B) \to \Hom_{\G_\Lambda}(\shf{\A}, \shf{\B})$ is a homomorphism.

Let $0 \to \A \to \B \to \C \to 0$ be a short exact sequence of $\Lambda$-modules.  
Then since the inductive limit of short exact sequences is a short exact sequence we have
$0 \to \shf{\A}(x) \to \shf{\B}(x) \to \shf{\C}(x) \to 0$  is exact for all $x \in \Lambda^\infty$.
Thus, $0 \to \shf{\A} \to \shf{\B} \to \shf{\C} \to 0$ is exact in $\S_{\G_\Lambda}$.
It follows that the functor $\A \mapsto \shf{\A}$ is exact, as desired.
\end{proof}

We now give the details of the above construction in the important special case of the $\G_\Lambda$-sheaves associated to the projective $\Lambda$-modules $P^n$.
\begin{example}
\label{ex:shf-Fn}

For any $n \in \N$, we have 
\[\shf{P^n}(x) \cong \Z\{((x, m, y), (\lambda_1, \ldots, \lambda_n) ) \in \G_\Lambda \times \Lambda^{*n}: s(\lambda_n) = r(y)\}.\]
For each $x \in \Lambda^\infty$, the map  $\shf{d_n}(x): \shf{P^n}(x) \to \shf{P^{n-1}}(x)$ is given on the generators by
\begin{align*}
\shf{d_n}(x)[(x,m,y),( \lambda_1, \ldots, \lambda_n)] &= [ (x,m,y),(\lambda_2, \ldots, \lambda_n) ]\\
& \quad +\sum_{j=1}^{n-1} (-1)^j [ (x,m, y),(\lambda_1, \ldots, \lambda_j \lambda_{j+1}, \ldots, \lambda_n)] \\
& \quad + (-1)^n[ (x,m-d(\lambda_n), \lambda_n y), ( \lambda_1, \ldots, \lambda_{n-1})] ;\\
\shf{d_1}(x)[(x,m,y),\lambda] &= [x,m,y] - [x, m-d(\lambda),\lambda y]; \\
\shf{d_0}(x)[x,m,y] &= 1_x.
\end{align*}
\begin{proof}
Fix $x \in \Lambda^\infty$, and write  $\shf{G^n}(x)$ for the free abelian group described above; that is, 
$\shf{G^n}(x) := \Z\{((x, m, y), (\lambda_1, \ldots, \lambda_n) ) \in \G_\Lambda \times \Lambda^{*n}: s(\lambda_n) = r(y)\}$. As above, we will use square brackets to indicate a generator of $\shf{G^n}(x)$.

 For each $p \in \N^k$, we have a map $\phi^x_p: P^n_{x(p)} \to \shf{G^n}(x)$ given on the generators by  
\[\phi^x_p([\lambda_0, \ldots, \lambda_n]) = [(x,p- d(\lambda_n), \lambda_n \sigma^p(x)), (\lambda_0, \ldots, \lambda_{n-1})].\]
Note that $\phi^x_q \circ P^n(x(p,q)) = \phi^x_p$, so the universal property of the direct limit implies that the maps $\{\phi^x_p\}_{p \in \N^k}$ induce a homomorphism $\phi: \shf{P^n}(x) \to \shf{G^n}(x)$.  

Moreover, every generator $[(x,m,y),(\lambda_1, \ldots, \lambda_n)]$ of $\shf{G^n}(x) $ is in the image of $\phi^x_p$ for some $p$: namely, 
\[ [(x, p-q, y),(\lambda_1, \ldots, \lambda_n)] = \phi^x_p([\lambda_1, \ldots, \lambda_n, y(0, q)]).  \]
This surjectivity implies that, if $\{\rho_{x(p)}\}_{p \in \N^k}: P^n_{x(p)} \to A$ is any family of group homomorphisms that is compatible with the maps $P^n(x(p,q)): P^n_{x(p)} \to P^n_{x(q)}$, then  we can define $R: \shf{G^n}(x) \to A$ by 
\[R\left( \phi^x_p ([\lambda_0, \lambda_1, \ldots, \lambda_n)]\right) = \rho_{x(p)}([\lambda_0, \ldots, \lambda_n]).\]
The universal property of the inductive limit now implies that $\shf{G^n}(x) = \shf{P^n}(x)$, as claimed.  The formula for $\shf{d_n}$ follows from the definition and universal properties of $\shf{P^n}$.
\end{proof}


Finally, note that for each $ \gamma =(x,m, y) \in \G_\Lambda$ and each $n \in \N$, the action $\alpha_\gamma: \shf{P^n}(y) \to \shf{P^n}(x)$ is given on generators as follows.
If $b = [\lambda_0, \ldots, \lambda_n]$ is a generator of $P^n_{y(p-m)}$ for some {$p  \in \N^k$ with $p \geq m$},  we have \[
\phi^y_{p-m}(b) = [(y,p-m-d(\lambda_n), \lambda_n \sigma^{p-m}(y)),(\lambda_0, \ldots, \lambda_{n-1})].
\]  
In this case, with $\shf{b} := \phi^y_{p-m}(b)$,
\begin{align*}\alpha_\gamma(\shf{b}) = (x, m, y) \cdot \shf{b} =& (x,m,y) \cdot \phi^y_{p-m}([\lambda_0, \ldots, \lambda_n]) = \phi^x_p([\lambda_0, \ldots, \lambda_n])  \\
&= [(x, p-d(\lambda_n), \lambda_n \sigma^p(x)), (\lambda_0, \ldots, \lambda_{n-1})].
\end{align*}
Note that since $(x,m,y) \in \G_\Lambda$ and $p \geq m$, we have $p - (p-m) = m$ and hence $\sigma^p(x) = \sigma^{p-m}(y)$.
\end{example}

 {In general, given $(x, m, y)$ and $(y, \ell, z) \in \G_\Lambda$, we can write $ z = \lambda \sigma^{p-m}(y)$ for some $p \in \N^k$ with $p \geq m$ and some $\lambda := z(0, d(\lambda)) \in \Lambda$.  By construction, we have 
$p-m-d(\lambda) = \ell$, and consequently $m + \ell = p-d(\lambda)$. 

If $(\lambda_0, \ldots, \lambda_{n-1}) \in \Lambda^{*n}$ satisfies $s(\lambda_{n-1}) = r(\lambda)$, we have 
 \[ \phi^y_{p-m}([\lambda_0, \ldots, \lambda_{n-1}, \lambda]) = [(y, \ell, z), (\lambda_0, \ldots, \lambda_{n-1})].\]  Thus, we obtain a general formula for the action of $\G_\Lambda$ on $\shf{P^n}$:
 \begin{equation}
 (x,m,y) \cdot [(y, \ell, z), (\lambda_0, \ldots, \lambda_{n-1})] = [(x, m+\ell, z), (\lambda_0, \ldots, \lambda_{n-1})]
 \label{eq:Fn-action}
 \end{equation}
 
 }

{
\subsection{$\G$-sheaves from sheaves of sets}

It is not always obvious whether a bundle $\mathscr{A}$ of abelian groups over $\G\z$ can be topologized in a way that makes it into a $\G$-sheaf.  If $\mathscr{A}$ arises from a $\G$-sheaf of sets, however, this is always possible.  As a matter of fact, all of the $\G$-sheaves we discuss in this paper have this format, so we pause to detail this construction.

\begin{defn}
\label{def:set-sheaf}
Let $\G$ be an \'etale groupoid. A topological space $Y$, equipped with an onto local homeomorphism $\pi: Y \to \G\z$, is a \emph{(left) $\G$-sheaf of sets}  if, for each $\gamma \in \G$, there is a bijection $\alpha_\gamma: \pi^{-1}(s(\gamma)) \to \pi^{-1}(r(\gamma))$ such that 
\begin{itemize}
\item If $x \in \G\z$ then $\alpha_x = id$.
\item If $(\gamma_1, \gamma_2) \in \G\2$ then $\alpha_{\gamma_1} \circ \alpha_{\gamma_2} = \alpha_{\gamma_1 \circ \gamma_2}$.
\item The map $\alpha: \G * Y \to Y$ given by $(\gamma, y) \mapsto \alpha_\gamma(y)$ is continuous.
\end{itemize}
\end{defn} }
 If $Y$ is a $\G$-sheaf of sets over $\G^{(0)}$,
then one may form a $\G$-sheaf of abelian groups $\Z[Y]$ over $\G^{(0)}$ 
such that the stalk at $x \in \G^{(0)}$ is  the free abelian group generated by $Y(x) := \pi^{-1}(x)$. 
When it is useful to distinguish between an element $y$ in $Y(x)$ from the corresponding generator of $\Z[Y](x),$ we use $[y]$ for the latter.  

The topology on $\Z[Y]$  can be described as follows.  Since $Y$ is a $\G$-sheaf of sets, for each point $e \in Y(x)$, there is an open neighborhood $\omega_e$ of $e$ and an open neighborhood $U_e$ of $x$ such that $\omega_e$ is homeomorphic to $U_e$. For each $z \in U_e$, write $e_z$ for the unique element of $Y(z) \cap \omega_e$. 

Given an arbitrary point $a$ in $\Z[Y](x)$, write $a$ as a finite sum, 
\[a = \sum_{e \in Y(x)} a_e [e],\]
 where $a_e \in \Z$. 
  Since this sum is finite, there is a finite set $Y^a(x) \subset Y(x)$ such that $a_e = 0$ if $e \not\in Y^a(x)$, and so 
  the set 
  \[\bigcap_{e \in Y^a(x)}  U_e \subseteq \G\z\]
   is open.  
  For any open $U \subseteq \cap_{e \in Y^a(x)} U_e$ such that $x \in U$, we define 
\[\mathcal{O}_{a, U} = \left\{\sum_{{e \in Y^a(x)} } a_e [e_z]: z \in U\right\}. \]
The sets $\{\mathcal{O}_{a,U}\}_{a, U}$ form a basis for the topology on $\Z[Y]$.

The action of $\G$  on  $\Z[Y]$ is determined by its action on $Y$: 
on generators, $\gamma \cdot [e] =  [\gamma  e]$ for $e \in Y({s(\gamma)})$. We then extend the action $\Z$-linearly.

Checking that $\{\mathcal{O}_{a,U}\}$ is indeed a basis, and that the group operations and the $\G$-action are  continuous, is analogous to the proof of Theorem \ref{thm:mod-sheaf-functor}.

{
\begin{example}
In Example \ref{ex:shf-Fn}, we showed  that the fiber over $x \in \Lambda^\infty$ of the $\G_\Lambda$-sheaf $\shf{P^n}$ is of the form $\Z[Y_n](x)$, where 
\[
Y_n := 
\{ ((x, m, y), ( \lambda_1, \ldots, \lambda_n)) \in \G_\Lambda \times \Lambda^{*n} : 
r(y) = s(\lambda_n) \}.
\]
In fact, we can topologize $Y_n$ so that $\Z[Y_n] \cong \shf{P^n}$ as $\G_\Lambda$-sheaves.  We thus obtain a second picture of the sheaves $\shf{P^n}$, complementing  their initial definition in Example \ref{ex:shf-Fn} as an inductive limit of $\Lambda$-modules.

First, we make $Y_n$ into a  sheaf of sets over $\G_\Lambda\z$ via the sheaf map $\pi: Y_n \to \G_\Lambda\z = \Lambda^\infty$ given by 
\[
\pi((x, m, y), ( \lambda_1, \ldots, \lambda_n)) = x.
\]
The topology on $Y_n$ is generated by the sets 
\[  Z(\lambda, \mu) \times \{ (\lambda_1, \ldots, \lambda_n)\},\]
with $ (\lambda, \mu) \in \Lambda *_s \Lambda, (\lambda_1, \ldots, \lambda_n) \in \Lambda^{*n},$ and $ s(\lambda_n) = r(\mu), $
where $Z(\lambda, \mu)$ is as in Definition \ref{def:groupoid}.  The $\G_\Lambda$-action on $Y_n$ is given by
\[ (z, \ell, x) \cdot ( (x, m,y),(\lambda_1,\ldots, \lambda_n)) := ((z, \ell+m, y), (\lambda_1, \ldots, \lambda_n)).\]

 Under this definition, not only do $\Z[Y_n]$ and $\shf{P^n}$ agree as sets, but the action of $\G_\Lambda$ on the generators is given by the same formula in both cases. We now check that the topology on the $\G_\Lambda$-sheaf $\Z[Y_n]$ agrees with the topology on $\shf{P^n}$ defined in Theorem \ref{thm:mod-sheaf-functor}. 
Let $F$ be a finite set and write 
\[ B := \sum_{i\in F} b_i [(x, m_i, y_i), (\lambda_1^i, \ldots, \lambda_n^i)] \in \Z[Y_n](x)\]
for integers $b_i$.
Choose a sufficiently small  open neighborhood $Z(\lambda)$ of $x$ such that, for each $i$,
we have  $y_i = \mu_i \sigma^{d(\lambda)}(x)$ for some $\mu_i$ with $m_i = d(\lambda) - d(\mu_i)$.  Setting $A = \sum_{i\in F} b_i [\lambda_1^i, \ldots, \lambda^i_n, \mu_i] \in P^n_{s(\lambda)},$ we have 
\begin{align*}
\mathcal{O}_{B, Z(\lambda)} &= \left\{ \sum_{i \in F} b_i [(\lambda z, m_i, \mu_i z), (\lambda_1^i, \ldots, \lambda_n^i)]: r(z) = s(\lambda) \right\} = S_{A, \lambda}.
\end{align*}
Similarly, if $C = \sum_j c_j [\lambda_0^j, \ldots, \lambda_n^j] \in P^n_{s(\lambda)}$ is arbitrary and $x \in Z(\lambda)$, let 
\[ D:= \sum_j c_j [(x, d(\lambda)- d(\lambda_n^j), \lambda_n^j \sigma^{d(\lambda)}(x)), (\lambda_0, \ldots, \lambda_{n-1})] \in \Z[Y_n](x).\]
One checks immediately that $\mathcal{O}_{D, Z(\lambda)} = S_{C, \lambda}$.  In other words, the identity map $\Z[Y_n] \to \shf{P^n}$ is a homeomorphism.
\label{ex:Fn-set-sheaf}
\end{example}

For another example, 
 we return to the (right) projective modules of Definition \ref{def:proj-res}.  Here, however, our focus will be on  $\G$-sheaves where the groupoid $\G$ acts on the left.  In the definition that follows, we also modify the notation slightly in order to mesh more easily with the standard notation for continuous groupoid $n$-cocycles (see Proposition \ref{prop:cocycle-coh} below).   

Write $\G^{(n)}$ for the set of composable $n$-tuples of elements of $\G$, for each $n \geq 0$.  (This set was denoted $\G^{*n}$ in Definition \ref{def:proj-res}.) The topology on $\G^{(n)} \subseteq \G \times \cdots \times \G $ is  the  subspace topology; consequently, the fact that the range and source maps in an \'etale groupoid are   local homeomorphisms ensures that the map $\pi: \G^{(n)} \to \G\z$ given by 
\[ \pi(\gamma_1, \ldots, \gamma_n) = r(\gamma_1)\]
is a surjective local homeomorphism.  With the left action of $\G$ given by left multiplication,  
\[
 \gamma \cdot (\gamma_1, \ldots, \gamma_n) = (\gamma\gamma_1, \ldots, \gamma_n),
 \]
$\G^{(n)}$ satisfies the conditions of Definition \ref{def:set-sheaf}.  Consequently, the remarks following Definition \ref{def:set-sheaf} tell us how to topologize $\Z [\G^{(n)}]$ in order to make it a $\G$-sheaf.  As a module, we denoted $\Z [\G^{(n)}]$ by $P^{n-1}$ in Definition \ref{def:proj-res}; when we wish to regard $\Z [\G^{(n)}]$ as a $\G$-sheaf, we will use the notation $\mathscr{P}_{n-1}$.

\begin{defn}
\label{def:Pn}
For each $n \ge 0$ we define the left $\G$-sheaf $\mathscr{P}_n := \Z[\G^{(n+1)}]$.  
Fix $n \ge 0$ and $x \in \G\z$; then
\[
(\mathscr{P}_n)(x) = \Z\{ (\gamma_0,\gamma_1, \ldots, \gamma_n) \in \G^{(n+1)}: r(\gamma_0) = x\}.
\]
For $n \ge 1$ define $\partial_n: \mathscr{P}_n \to \mathscr{P}_{n-1}$ on the generators by 
\[
\partial_n[\gamma_0,\gamma_1, \ldots, \gamma_n] = (-1)^{n} [\gamma_0, \ldots, \gamma_{n-1}] 
 + \sum_{i=1}^n (-1)^{i-1} [\gamma_0,\gamma_1, \ldots, \gamma_{i-1} \gamma_{i}, \ldots, \gamma_n]
 \]
If $n=0$, we define $\partial_0: \mathscr{P}_0 \to \shf{\Z}$ on the generators by 
\[\partial_0[\gamma] = [1]_{r(\gamma)}.\]
 \end{defn}

\begin{prop}\label{prop:script-p}
{For any \'etale groupoid $\G$,} the sequence $\{ \mathscr{P}_n, \partial_n\}_{n\in \N}$ constitutes a resolution of {the constant $\G$-sheaf} $\shf{\Z}$. 
\end{prop}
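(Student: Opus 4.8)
The plan is to establish three things: that each $\partial_n$ is genuinely a morphism of $\G$-sheaves, that exactness of $\{\mathscr{P}_n,\partial_n\}_{n\in\N}$ can be checked one stalk at a time, and that each stalk complex is acyclic by means of an explicit contracting homotopy.

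For the first point I would check equivariance and continuity of each $\partial_n$. Equivariance is read off from Definition \ref{def:Pn}: the left $\G$-action touches only the leading entry $\gamma_0$, while every summand of $\partial_n$ either deletes the last entry or multiplies an adjacent pair $\gamma_{i-1}\gamma_i$, and these operations commute with left multiplication on $\gamma_0$ (for $i=1$ because $\gamma\cdot(\gamma_0\gamma_1)=(\gamma\gamma_0)\gamma_1$); for $\partial_0$ one uses $r(\gamma\gamma_0)=r(\gamma)$ together with the triviality of the action on $\shf{\Z}$. Continuity is handled exactly as in the proof of Theorem \ref{thm:mod-sheaf-functor} and the remarks following Definition \ref{def:set-sheaf}: each summand of $\partial_n$ is $\Z[-]$ applied to a continuous map $\G^{(n+1)}\to\G^{(n)}$ --- a coordinate projection, or a map multiplying two adjacent coordinates, both continuous since the structure maps of an \'etale groupoid are continuous --- and a finite $\Z$-linear combination of continuous morphisms of abelian-group sheaves over $\G\z$ is again continuous. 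Finally, $\partial_n\circ\partial_{n+1}=0$ and $\partial_0\circ\partial_1=0$ follow by the same sign cancellation as for the maps $d_n$ of Definition \ref{def:proj-res}, giving the inclusions $\text{Im}\,\partial_{n+1}\subseteq\ker\partial_n$ and $\text{Im}\,\partial_1\subseteq\ker\partial_0$.

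For the second point, Remark \ref{rmk:category-gsheaves} tells us that kernels and images in $\S_\G$ are computed exactly as in the category of sheaves of abelian groups over $\G\z$, so a complex of $\G$-sheaves is exact iff it is exact on every stalk. Hence it suffices to show that for each $x\in\G\z$ the augmented complex of abelian groups
\[ \cdots \xrightarrow{\ \partial_2\ } (\mathscr{P}_1)(x)\xrightarrow{\ \partial_1\ }(\mathscr{P}_0)(x)\xrightarrow{\ \partial_0\ }\shf{\Z}(x)\to 0 \]
is exact, the surjectivity of $\partial_0$ at $x$ being witnessed by the generator $[x]$ (with $x\in\G\z\subseteq\G$ viewed as a morphism), for which $\partial_0[x]=[1]_x$. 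To get exactness at the remaining spots I would construct, for fixed $x$, a contracting homotopy: set $h_{-1}\colon\shf{\Z}(x)\to(\mathscr{P}_0)(x)$, $h_{-1}([1]_x)=[x]$, and for $n\geq 0$ define $h_n\colon(\mathscr{P}_n)(x)\to(\mathscr{P}_{n+1})(x)$ on generators by $h_n[\gamma_0,\ldots,\gamma_n]=[x,\gamma_0,\ldots,\gamma_n]$, which is legitimate since $r(\gamma_0)=x$ forces $(x,\gamma_0,\ldots,\gamma_n)\in\G^{(n+2)}$ with range $x$. Expanding $\partial_{n+1}\circ h_n$ via Definition \ref{def:Pn}, the term of $\partial_{n+1}$ contracting $x\gamma_0$ to $\gamma_0$ returns the original generator, while the term deleting the last entry and the terms contracting a pair $\gamma_{i-1}\gamma_i$ cancel against the corresponding terms of $h_{n-1}\circ\partial_n$ (the cancellation being precisely the discrepancy between the exponents $i-1$ and $i$ in the defining formula of $\partial_n$); thus $\partial_{n+1}\circ h_n+h_{n-1}\circ\partial_n=\mathrm{id}$ for $n\geq 1$, and a short separate computation gives $\partial_1\circ h_0+h_{-1}\circ\partial_0=\mathrm{id}$. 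Consequently $\ker\partial_n\subseteq\text{Im}\,\partial_{n+1}$ for $n\geq 1$ and $\ker\partial_0\subseteq\text{Im}\,\partial_1$, which together with the reverse inclusions already noted gives stalkwise exactness, and hence the proposition.

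I expect the only real difficulty to be the sign bookkeeping in verifying the homotopy identity. It is also worth emphasizing that $h_n$ is merely a homomorphism on each fibre and is \emph{not} $\G$-equivariant (acting by $\delta$ sends $[x,\gamma_0,\ldots,\gamma_n]$ to $[\delta,\gamma_0,\ldots,\gamma_n]$ rather than to $[r(\delta),\delta\gamma_0,\gamma_1,\ldots,\gamma_n]$), so one genuinely relies on the reduction that exactness of $\G$-sheaves is detected on stalks rather than on producing a splitting inside $\S_\G$.
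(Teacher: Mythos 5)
Your argument is correct and follows essentially the same route as the paper: after noting equivariance, continuity, and $\partial_n\partial_{n+1}=0$, the paper also establishes exactness via the contracting homotopy $s_n([\gamma_0,\ldots,\gamma_n])=[r(\gamma_0),\gamma_0,\ldots,\gamma_n]$, $s_{-1}([1]_x)=[x]$, which is exactly your $h_n$. You simply spell out the details (stalkwise reduction, sign cancellation, non-equivariance of the homotopy) that the paper leaves as ``straightforward.''
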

\begin{proof}
 It is straightforward to check that $\partial_n$ is an equivariant sheaf map
and $\partial_n \partial_{n+1} = 0$ for each $n \ge 0$; the continuity of $\partial_n$ is a consequence of the continuity of the multiplication in $\G$.

 Indeed, the maps $\partial_n$ are simply a translation into the setting of left $\G$-sheaves of the maps $d_n$ of Definition \ref{def:proj-res}.  Consequently, just as in the proof of Proposition \ref{prop:proj-res}, we have a contracting homotopy $\{s_n: \mathscr{P}_n \to \mathscr{P}_{n+1}\}_n$ given on generators by 
\[ s_n([\gamma_0, \ldots, \gamma_n]) = [r(\gamma_0), \gamma_0, \ldots, \gamma_n]; \qquad   s_{-1}([1]_x) = [x].\]
It follows that $\ker \partial_n = \text{Im}\, \partial_{n+1}$ for all $n$.  The foregoing proves the following.
\end{proof}
{It  can be shown that $\{\mathscr{P}_n, \partial_n\}_n$ is in fact a relative projective resolution of $\shf{\Z}$, 
but we shall not need this here so we omit the details.}

\subsection{Continuous cocycle cohomology}
One can use the resolution $\{\mathscr{P}_n, \partial_n\}_n$ to compute the continuous cocycle cohomology of a groupoid $\G$ with coefficients in a $\G$-sheaf $\mathscr{A}$; this is the content of Proposition \ref{prop:cocycle-coh} below.

For the definition of the continuous cocycle cohomology $H^n_c(\G, \mathscr{A})$, we follow \cite[Definition 1.11 ff]{renault} except that we do not require the cocycles be normalized.  As in the case of $k$-graph cocycles, this does not change the cohomology groups, since every cocycle is cohomologous to a normalized cocycle.

\begin{defn}
\label{def:cocycle-coh}{\cite[Definitions 1.11-1.12]{renault}}
Let $\mathscr{A}$ be a
 $\G$-sheaf.  
The  set of \emph{continuous groupoid $n$-cochains} with values in $\mathscr{A}$ 
 is defined to be
\[
C^n_c(\G, \mathscr{A}) := 
\{ f : \G^{(n)} \to \mathscr{A} : f \text{ continuous, }
f(\gamma_1, \ldots, \gamma_n) \in \mathscr{A}_{r(\gamma_1)} \}.
\]
We regard $C^n(\G, \mathscr{A})$ as an abelian group under pointwise addition.
\end{defn}
Alternatively, we may regard 
$C^n_c(\G, \mathscr{A})$ as the group of continuous sections of the pull-back sheaf
$r^*( \mathscr{A}) \to \G^{(n)}$, where $r(\gamma_1, \ldots, \gamma_n) = r(\gamma_1)$.

We form a cohomology complex by defining the boundary maps
$\delta^n_c: C^n_c(\G, \mathscr{A}) \to C^{n+1}_c(\G, \mathscr{A})$ as follows.
Let $f \in C^n_c(\G, \mathscr{A})$ and $(\gamma_0, \gamma_1, \ldots, \gamma_n) \in \G^{(n+1)}$.
If $n \ge 1$ set
\begin{multline*}
\delta^n_c f(\gamma_0, \gamma_1, \ldots, \gamma_n) := \gamma_0 \cdot f(\gamma_1, \ldots, \gamma_n)
+ \sum_{i=1}^n (-1)^i f(\gamma_0, \dots, \gamma_{i-1}\gamma_i, \dots,\gamma_n) \\
+ (-1)^{n+1}f(\gamma_0, \gamma_1, \ldots, \gamma_{n-1});
\end{multline*}
 if $n = 0$ set $\delta^0_c f(\gamma_0) := \gamma_0 \cdot f(s(\gamma_0)) - f(r(\gamma_0))$.
A straightforward computation shows that $\delta^{n+1}_c \circ \delta^n_c = 0$.

Define 
the group of $n$-cocycles 
$Z^n_c(\G, \mathscr{A}) := \ker \delta^n_c$ and the group of $n$-coboundaries
$B^n_c(\G, \mathscr{A}) := 
 {\text{Im}\, \delta^{n-1}_c}$ for $n \ge 1$; set
$B^0_c(\G, \mathscr{A}) := 0$.  
The \emph{$n$th continuous cocycle cohomology group} of $\G$ with coefficients in $\mathscr{A}$ is then defined to be 
\[
H^n_c(\G, \mathscr{A}) = Z^n_c(\G, \mathscr{A})/B^n_c(\G, \mathscr{A}) = \frac{\ker \delta^n_c}{\text{Im}\, \delta^{n-1}_c}.
\]

{For any   resolution $\{\mathscr{Q}_n, \vartheta_n\}_{n\in \N}$ of $\shf{\Z}$, the cohomology groups of the associated complex $\Hom_\G(\mathscr{Q}_*, \mathscr{A})$ often contain useful information about the cohomology of $\G$.  In addition to Proposition \ref{prop:cocycle-coh} below, which shows that the continuous cocycle cohomology can be computed from the resolution $\{ \mathscr{P}_n, \partial_n\}_n$ of Definition \ref{def:Pn}, we will see this principle at work in Proposition \ref{prop:exact-to-sheaf} in the next section.}

The  boundary maps  $\delta_n: \Hom_\G(\mathscr{Q}_n, \mathscr{A}) \to \Hom_\G(\mathscr{Q}_{n+1}, \mathscr{A})$ of the  complex $\Hom_\G(\mathscr{Q}_*, \mathscr{A})$ are given by $
\delta_n f =  f \circ \vartheta_{n+1}.$
We define $Z^n_{\mathscr Q}(\G,\mathscr{A}) := \ker \delta_n$ for $n \ge 0$,  
$B^n_{\mathscr{Q}}(\G,\mathscr{A}) := \text{Im}\,\delta_{n-1}$ for $n \ge 1$ and 
$B^0_{\mathscr{Q}}(\G,\mathscr{A}) := 0$.
Then the  cohomology groups {of the complex}  are 
\begin{equation}
H^n_{\mathscr{Q}}(\G, \mathscr{A}) := H^n(\Hom_{\G}(\mathscr{Q}_*, \mathscr{A})) = \frac{Z^n_{\mathscr{Q}}(\G, \mathscr{A})}{B^n_{\mathscr{Q}}(\G, \mathscr{A})}.
\label{eq:exact-coh}
\end{equation}



\begin{prop}
\label{prop:cocycle-coh}
{Let $\G$ be an \'etale groupoid and let $\mathscr{A}$ be a $\G$-sheaf.}
For all $n \geq 0$, 
there is an isomorphism 
$\xi^n : \Hom_\G(\mathscr{P}_n, \mathscr{A}) \to C^n_c(\G, \mathscr{A})$ determined by
\[
(\xi^nf)(\gamma_1, \ldots, \gamma_n) = f([r(\gamma_1), \gamma_1, \ldots, \gamma_n]),
\]
which is compatible with the boundary maps and induces an isomorphism
\[
H^n_{\mathscr P}(\G, \mathscr{A}) \cong H^n_c(\G, \mathscr{A}) 
\qquad \text{for every } n \ge 0.
\]
The inverse is induced by $\eta^n : C^n_c(\G, \mathscr{A}) \to  \Hom_\G(\mathscr{P}_n, \mathscr{A})$ determined by
\[
(\eta^n f )([\gamma_0, \gamma_1, \ldots, \gamma_n ]) = \gamma_0 \cdot f(\gamma_1, \ldots, \gamma_n).
\]

\end{prop}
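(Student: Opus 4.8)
The plan is to verify directly that $\xi^n$ and $\eta^n$ are mutually inverse homomorphisms of abelian groups, and that both commute with the respective boundary maps, so that they descend to the claimed isomorphism on cohomology. I would begin by checking that the formula for $\eta^n$ actually produces an element of $\Hom_\G(\mathscr{P}_n, \mathscr{A})$: given a continuous section $f$, one defines $\eta^n f$ on generators $[\gamma_0, \ldots, \gamma_n]$ of $\mathscr{P}_n$ by $\gamma_0 \cdot f(\gamma_1, \ldots, \gamma_n)$ and extends $\Z$-linearly. The only subtlety is $\G$-equivariance: for $\gamma$ with $s(\gamma) = r(\gamma_0)$, one has $\gamma \cdot [\gamma_0, \ldots, \gamma_n] = [\gamma\gamma_0, \gamma_1, \ldots, \gamma_n]$, and $(\eta^n f)([\gamma\gamma_0, \gamma_1, \ldots, \gamma_n]) = (\gamma\gamma_0)\cdot f(\gamma_1, \ldots, \gamma_n) = \gamma \cdot (\gamma_0 \cdot f(\gamma_1, \ldots, \gamma_n))$ using the cocycle property $\alpha_{\gamma\gamma_0} = \alpha_\gamma \circ \alpha_{\gamma_0}$ from Definition \ref{def:gpoid-sheaf}. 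Continuity of $\eta^n f$ as a section, and of $\xi^n f$ as a map, follows from continuity of the $\G$-action on $\mathscr{A}$ together with the description of the topology on $\Z[\G^{(n+1)}]$ given after Definition \ref{def:set-sheaf}; this is routine and parallels the continuity checks in Theorem \ref{thm:mod-sheaf-functor}.

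Next I would verify that $\xi^n$ and $\eta^n$ are inverse to each other. In one direction, $(\xi^n \eta^n f)(\gamma_1, \ldots, \gamma_n) = (\eta^n f)([r(\gamma_1), \gamma_1, \ldots, \gamma_n]) = r(\gamma_1) \cdot f(\gamma_1, \ldots, \gamma_n) = f(\gamma_1, \ldots, \gamma_n)$, since $\alpha_x = \text{id}$ for units $x$. In the other direction, for $g \in \Hom_\G(\mathscr{P}_n, \mathscr{A})$, one uses equivariance of $g$: $(\eta^n \xi^n g)([\gamma_0, \ldots, \gamma_n]) = \gamma_0 \cdot (\xi^n g)(\gamma_1, \ldots, \gamma_n) = \gamma_0 \cdot g([r(\gamma_1), \gamma_1, \ldots, \gamma_n]) = g(\gamma_0 \cdot [r(\gamma_1), \gamma_1, \ldots, \gamma_n]) = g([\gamma_0, \gamma_1, \ldots, \gamma_n])$, where the last step uses $\gamma_0 \cdot [r(\gamma_1), \gamma_1, \ldots, \gamma_n] = [\gamma_0 r(\gamma_1), \gamma_1, \ldots, \gamma_n] = [\gamma_0, \gamma_1, \ldots, \gamma_n]$ because $r(\gamma_1) = s(\gamma_0)$ and $\gamma_0 s(\gamma_0) = \gamma_0$. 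Since $\mathscr{P}_n$ is generated by these elements and both $\xi^n, \eta^n$ are $\Z$-linear, this gives $\eta^n \xi^n = \text{id}$. Additivity of $\xi^n$ (and hence $\eta^n$) is immediate from the pointwise addition on $C^n_c(\G,\mathscr{A})$.

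Finally I would check compatibility with boundary maps, i.e.\ $\xi^{n+1} \circ \delta_n = \delta^n_c \circ \xi^n$, where $\delta_n g = g \circ \partial_{n+1}$. Applying both sides to $(\gamma_0, \ldots, \gamma_n) \in \G^{(n+1)}$ and feeding in $[r(\gamma_0), \gamma_0, \ldots, \gamma_n]$, one expands $\partial_{n+1}[r(\gamma_0), \gamma_0, \ldots, \gamma_n]$ using Definition \ref{def:Pn}: the term $(-1)^{n+1}[r(\gamma_0), \gamma_0, \ldots, \gamma_{n-1}]$ matches the $(-1)^{n+1}$ tail term of $\delta^n_c$; the $i=1$ term $[r(\gamma_0)\gamma_0, \gamma_1, \ldots, \gamma_n] = [\gamma_0, \gamma_1, \ldots, \gamma_n]$, on which $g$ takes value $\gamma_0 \cdot (\xi^n g)(\gamma_1, \ldots, \gamma_n)$ by equivariance, matches the leading $\gamma_0 \cdot f(\gamma_1, \ldots, \gamma_n)$ term; and the remaining terms match the alternating sum $\sum_{i=1}^n (-1)^i f(\gamma_0, \ldots, \gamma_{i-1}\gamma_i, \ldots, \gamma_n)$ after a reindexing and sign bookkeeping. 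The $n=0$ case is checked separately against the formula $\delta^0_c f(\gamma_0) = \gamma_0 \cdot f(s(\gamma_0)) - f(r(\gamma_0))$, using $\partial_1[r(\gamma_0), \gamma_0] = [r(\gamma_0)\gamma_0] - [r(\gamma_0)] = [\gamma_0] - [r(\gamma_0)]$ and $\partial_0[\gamma] = [1]_{r(\gamma)}$. Since $\xi^*$ is an isomorphism of cochain complexes, it induces the claimed isomorphism $H^n_{\mathscr{P}}(\G, \mathscr{A}) \cong H^n_c(\G, \mathscr{A})$ for all $n \geq 0$. The main obstacle is purely bookkeeping: keeping the signs in $\partial_{n+1}$ aligned with those in $\delta^n_c$ under the index shift induced by prepending $r(\gamma_0)$; there is no conceptual difficulty, since the resolution $\{\mathscr{P}_n, \partial_n\}$ was designed precisely as the left-$\G$-sheaf translation of the bar-type resolution $\{P^n, d_n\}$.
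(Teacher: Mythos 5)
Your proposal is correct and takes essentially the same approach as the paper: direct verification that $\xi^n$ and $\eta^n$ are mutually inverse, relying on the equivariance identity $\gamma_0 \cdot f([r(\gamma_1), \gamma_1, \ldots, \gamma_n]) = f([\gamma_0, \gamma_1, \ldots, \gamma_n])$, together with the term-by-term check that $\xi^{n+1}(f \circ \partial_{n+1}) = \delta^n_c(\xi^n f)$. The only place the paper is slightly more explicit is in the continuity checks (for $\xi^n f$ via $r$ being a local homeomorphism, and for $\eta^n f$ via the observation that a convergent net in $\mathscr{P}_n$ must eventually have constant finite support and coefficients), which you correctly flag as routine.
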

\begin{proof}
We first check that for each sheaf morphism $f \in \Hom_\G(\mathscr{P}_n, \mathscr{A})$,  $\xi^n f$ is  continuous.  
If $\{(\gamma_1^i, \ldots, \gamma_n^i)\}_i \to (\gamma_1, \ldots, \gamma_n)$ in $\G^{(n)}$, the fact that $r$ is a local homeomorphism implies that $\{ (r(\gamma_1^i), \gamma_1^i, \ldots, \gamma_n^i)\}_i \to (r(\gamma_1), \gamma_1, \ldots, \gamma_n)$ in $\G^{(n+1)}$ and hence in $\mathscr{P}_n = \Z[\G^{(n+1)}]$.   
The fact that any morphism $f$ of $\G$-sheaves is continuous then implies that $\xi^n f$ is continuous for all $f \in \Hom_\G(\mathscr{P}_n, \mathscr{A})$.

Moreover, for any $(\gamma_0, \ldots, \gamma_n) \in \G^{(n+1)}$ and any $f \in \Hom_\G(\mathscr{P}_n, \mathscr{A})$,
\begin{equation}
\label{eq:pn-equivar}
\gamma_0 \cdot f[r(\gamma_1), \gamma_1, \ldots, \gamma_n] = f[\gamma_0, \gamma_1, \ldots, \gamma_n].
\end{equation}
A routine computation, exploiting this fact, shows that 
\[\delta^n_c(\xi^n f) = \xi^{n+1} ( f \circ \partial_{n+1});\] 
in other words, that $\xi^n$ takes cocycles to cocycles and coboundaries to coboundaries.  Consequently, it induces a homomorphism $ H^n_{\mathscr P}(\G, \mathscr{A})) \to H^n_c(\G, \mathscr{A})$.

To see that $\xi^n$ is an isomorphism, 
recall that  $\eta^n : C^n_c(\G, \mathscr{A}) \to \Hom_\G(\mathscr{P}_n, \mathscr{A})$ 
is given by
\[
\eta^n f \left(\sum_{i\in F} a_i [\gamma_0^i, \gamma_1^i, \ldots, \gamma_n^i ]\right) = \sum_{i\in F} a_i\left(  \gamma_0^i \cdot f(\gamma_1^i, \ldots, \gamma_n^i)\right).
\]
One checks easily that $\eta^nf$ is $\G$-equivariant for all $f \in C^n_c(\G, \mathscr{A})$, {and the continuity of $\eta^n f$ follows from the continuity of the $\G$-action on $\mathscr{A}$, the continuity of $f$, and the fact that if 
\[\lim_J \sum_{i \in F_J} a_{i,J} [\gamma_0^{i, J}, \ldots, \gamma_n^{i,J}] = \sum_{i \in F} a_i [\gamma_0^i, \ldots, \gamma_n^i] \in \mathscr{P}_n,\]
 then we must have $F_J = F$ eventually and, at that point, $a_{i, J} = a_i$ for all $i \in F$.}
 Moreover,  for each $n \ge 0$, \eqref{eq:pn-equivar} implies that  $\xi^n\eta^n = \text{id}_{C^n_c(\G, \mathscr{A})}$ and
$\eta^n\xi^n = \text{id}_{\Hom(\mathscr{P}_n, \mathscr{A})}$.
\end{proof}

\begin{rmk}
An analogue of Proposition \ref{prop:cocycle-coh} was proved in \cite[5.1]{equiv-sheaf-coh}, using a relative injective resolution of $\mathscr{A}$.
\end{rmk}


\section{A commuting diagram of cohomologies} 
\label{sec:equiv-sheaf}

In this section, we use  sheaf cohomology to show how the $k$-graph  cohomology  $H^n(\Lambda, \A)$ and the continuous cocycle cohomology $H^n_c(\G_\Lambda, \shf{\A})$ relate.  More precisely, Theorem \ref{thm:mod-sheaf-functor} and
Proposition \ref{prop:cocycle-coh} above, and  Proposition \ref{prop:exact-to-sheaf} below, allow us to identify homomorphisms 
\begin{align*}
H^n(\Lambda, \A) \to H^n_{\shf{P}}(\G, \shf{\A}) &\xrightarrow{\rho^n_{\shf{P}}} H^n(\G_\Lambda, \shf{\A}) \\
\quad \text{ and } \quad H^n_c(\G_\Lambda, \shf{\A}) \xrightarrow{\cong} H^n_{\mathscr{P}}(\G_\Lambda, \shf{A})  &\xrightarrow{\rho^n_{\mathscr P}} H^n(\G_\Lambda, \shf{\A}),
\end{align*}
where $H^n(\G_\Lambda, \shf{\A})$ denotes the sheaf cohomology of $\G_\Lambda$ with coefficients in $\shf{\A}$. 
We also construct a map $\psi_n^*: H^n_c(\G_\Lambda, \shf{\A}) \to H^n_{\shf P} (\G, \shf{\A}))$ and 
show that the map from the continuous cocycle cohomology to the sheaf cohomology factors through $\psi_n^*$.

Thus, the main result of this section is the following:
\begin{thm}
\label{thm:diagram-commutes}
For any row-finite higher-rank graph $\Lambda$ with no sources, and any $\Lambda$-module $\A$, we have a commuting diagram (for $n \ge 2$):
\begin{equation}\label{eq:maps}
\begin{tikzpicture}
    \node (02) at (0,2) {$H^n(\Lambda, \A)$};
    \node (42) at (4,2) {$H^n_{\shf P}(\G_\Lambda, \shf{\A}))$};
    \node (00) at (0,0) {$H^n_c(\G_\Lambda,  \shf{\A}))$};
    \node (40) at (4,0) {$H^n_{\mathscr{P}}(\G_\Lambda, \shf{\A}))$};
    \node (82) at (8,2) {$H^n(\G_\Lambda,  \shf{\A}))$};
    \draw[->] (40) -- node[right] {${\psi_n^*}$} (42);
    \draw[->] (00) -- node[above] {${{\eta^n}}$} node[below] {${\scriptstyle{\cong}}$} (40);
    \draw[->] (02) -- (42);
    \draw[->] (02) -- node[left] {\cite{kps-twisted}}  node[right]{($n \leq 2$)} (00);
    \draw[->] (40) -- node[below] {${\rho^n_{\mathscr{P}}}$} (82);
    \draw[->] (42)-- node[above] {${\rho^n_{\shf{P}}}$} (82);    
\end{tikzpicture}
\end{equation}
\end{thm}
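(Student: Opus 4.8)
The plan is to split the diagram into its two natural pieces and verify each one at the level of (co)chain complexes, where everything is induced by honest chain maps and it suffices to check equalities on generators. The first piece is the right-hand triangle, which I claim commutes for \emph{every} $n$; the second is the left-hand square, which I claim commutes whenever the \cite{kps-twisted} map is available, i.e.\ for $n\le 2$. Thus the full displayed diagram commutes for $n=2$, and for $n\ge 3$ (where the left-hand arrows are not defined) the surviving content is precisely the triangle. Both pieces rest on material already in place: the exact functor $\A\mapsto\shf\A$ of Theorem \ref{thm:mod-sheaf-functor} (which sends the resolution $P^\bullet\to\Z^\Lambda$ to the $\G_\Lambda$-sheaf resolution $\shf{P^\bullet}\to\shf\Z$, using exactness), the comparison maps $\rho$ of Proposition \ref{prop:exact-to-sheaf}, the identifications of Example \ref{ex:shf-Fn} and Example \ref{ex:Fn-set-sheaf}, the bar resolution $\mathscr P_\bullet$ of Definition \ref{def:Pn}, and the isomorphism $\eta^n$ of Proposition \ref{prop:cocycle-coh}.

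\textbf{The triangle.} The map $\psi_n^*$ is precomposition with a $\G_\Lambda$-sheaf chain map $\psi_\bullet\colon\shf{P^\bullet}\to\mathscr P_\bullet$ lying over $\text{id}_{\shf\Z}$. Concretely, in the $\Z[Y_n]$ picture of Example \ref{ex:Fn-set-sheaf} one sends a generator $[(x,m,y),(\lambda_1,\dots,\lambda_n)]$ to $[(x,\,m-d(\lambda_1\cdots\lambda_n),\,\lambda_1\cdots\lambda_n y),\,\gamma_1,\dots,\gamma_n]$, where $z_i=\lambda_{i+1}\cdots\lambda_n y$ and $\gamma_i=(\lambda_i z_i,d(\lambda_i),z_i)\in\G_\Lambda$; that is, one replaces the $\Lambda$-entries by the groupoid elements through which they act on the relevant shifts of $y$. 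Equivariance is immediate from \eqref{eq:Fn-action}, continuity is checked exactly as in Theorem \ref{thm:mod-sheaf-functor} (basic open sets map into basic open sets), compatibility with the augmentations $\shf{d_0}$ and $\partial_0$ is a one-line check, and the chain-map identity $\partial_n\circ\psi_n=\psi_{n-1}\circ\shf{d_n}$ follows from a term-by-term comparison of the formulas in Example \ref{ex:shf-Fn} and Definition \ref{def:Pn} (after matching the two sign conventions). By the comparison theorem, any two such chain maps over $\text{id}_{\shf\Z}$ are chain homotopic, so $\psi_n^*$ is well defined on cohomology. Finally, the comparison map $\rho_{\mathscr Q}$ attached by Proposition \ref{prop:exact-to-sheaf} to a resolution $\mathscr Q_\bullet\to\shf\Z$ is functorial in $\mathscr Q_\bullet$ — this is visible from its construction via a double complex $\Hom_{\G_\Lambda}(\mathscr Q_\bullet,\mathscr I^\bullet)$ against an injective resolution $\mathscr I^\bullet$ of $\shf\A$, together with the quasi-isomorphism $\mathscr Q_\bullet\to\shf\Z$ — so the chain map $\psi_\bullet$ forces $\rho^n_{\shf P}\circ\psi_n^*=\rho^n_{\mathscr P}$ for all $n$, in particular for $n\ge 2$.

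\textbf{The square.} Fix $n\le 2$ and a class in $H^n(\Lambda,\A)$ represented by $c\in\Hom_\Lambda(P^n,\A)$, equivalently (Proposition \ref{prop:same-coh}) by the categorical cocycle $\tilde c:=\zeta^n c$. Going right across the top, the functor $\A\mapsto\shf\A$ sends $c$ to $\shf c\in\Hom_{\G_\Lambda}(\shf{P^n},\shf\A)$, which by Theorem \ref{thm:mod-sheaf-functor} and Example \ref{ex:shf-Fn} is given on generators (taking the hidden $(n{+}1)$-st entry to be $s(\lambda_n)$) by $\shf c[(x,m,y),(\lambda_1,\dots,\lambda_n)]=\varphi^x_m(\tilde c(\lambda_1,\dots,\lambda_n))$. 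Going down, then right, then up, one applies the \cite[Lemma 6.3]{kps-twisted}-type map $\sigma$ (elementary for $n=0,1$; the content of that lemma for $n=2$) to get $\sigma(\tilde c)\in C^n_c(\G_\Lambda,\shf\A)$, then $\eta^n$ to get $f\in\Hom_{\G_\Lambda}(\mathscr P_n,\shf\A)$ with $f[\gamma_0,\dots,\gamma_n]=\gamma_0\cdot\sigma(\tilde c)(\gamma_1,\dots,\gamma_n)$, then $\psi_n^*$ to get $f\circ\psi_n$. Evaluating $f\circ\psi_n$ on the generator above and inserting the formula for $\psi_n$ from the previous paragraph, one gets $\gamma_0\cdot\sigma(\tilde c)(\gamma_1,\dots,\gamma_n)$ with the $\gamma_i$ the groupoid elements of the $\lambda_i$; since $\sigma$ is built precisely so that its value on such a tuple of ``path'' groupoid elements is $\tilde c(\lambda_1,\dots,\lambda_n)$ transported into the stalk $\shf\A(r(\gamma_1))$, and since the $\G_\Lambda$-action then carries this to $\varphi^x_m(\tilde c(\lambda_1,\dots,\lambda_n))$ via \eqref{eq:GLambda-action}, we obtain $f\circ\psi_n=\shf c$ on generators, hence as elements of $\Hom_{\G_\Lambda}(\shf{P^n},\shf\A)$, and the square commutes (at cochain level, a fortiori on cohomology) for $n\le 2$.

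\textbf{The main obstacle.} The triangle is essentially formal once $\psi_\bullet$ is in hand: it only uses uniqueness up to homotopy of comparison chain maps and functoriality of the $\rho$ construction. The real work is in the square, specifically in reconciling the \emph{ad hoc} \cite{kps-twisted} cocycle $\sigma$ with the canonical functorial image $\shf c$ through the full chain of identifications — the direct-limit stalks $\shf\A(x)$, the two descriptions of $\shf{P^n}$ (as $\varinjlim P^n_{x(p)}$ and as $\Z[Y_n]$), the bar resolution $\mathscr P_\bullet$, and the maps $\eta^n$ and $\psi_n$ — where all the bookkeeping with degrees and with the factorization property is concentrated. This is also exactly why the argument is confined to $n\le 2$: $\sigma$ is only defined there, so there is no square to commute in higher degrees.
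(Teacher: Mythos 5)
Your overall architecture matches the paper's: split \eqref{eq:maps} into the right-hand triangle (all $n$) and the left-hand square ($n\le 2$), build a chain map $\psi_\bullet\colon \shf{P^*}\to\mathscr{P}_*$ over $\mathrm{id}_{\shf{\Z}}$, and deduce the triangle from it; your variant of $\psi_n$ (forward-oriented groupoid elements $(\lambda_i z_i, d(\lambda_i), z_i)$ in place of the paper's inverted ones) does satisfy $\partial_n\circ\psi_n=\psi_{n-1}\circ\shf{d_n}$ with no sign correction, so that part is a legitimate cosmetic variant. Two caveats on the triangle: well-definedness of $\psi_n^*$ on cohomology needs no comparison theorem (it is immediate from the chain-map identity), and the ``functoriality of $\rho$'' you invoke via a double complex against an injective resolution is not how Proposition \ref{prop:exact-to-sheaf} constructs $\rho^n_{\mathscr{Q}}$ — there $\rho^n_{\mathscr{Q}}(c)$ is the Yoneda class of the pushout extension $\mathscr{R}_c$. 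The functoriality you need is true, but the honest proof is the explicit morphism of $n$-fold extensions $[a,p]\mapsto[a,\psi_{n-1}(p)]$ from $\mathscr{R}_{\psi_n^*(c)}$ to $\mathscr{R}_c$; you should supply that map (or verify your double-complex description of $\rho$ agrees with the pushout one, which itself requires an acyclicity hypothesis on the $\mathscr{Q}_n$ that the paper deliberately avoids claiming).

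The genuine gap is in the square. You reduce it to the identity $\gamma_0'\cdot\sigma_c(\gamma_1',\dots,\gamma_n')=\shf{c}[(x,m,y),(\lambda_1,\dots,\lambda_n)]$ and then assert it because ``$\sigma$ is built precisely so that its value on such a tuple of path groupoid elements is $\tilde c(\lambda_1,\dots,\lambda_n)$.'' That is not the definition of $\sigma_c$: for $n=2$ the map of \cite[Lemma 6.3]{kps-twisted} is given by a six-term expression $c(\mu_g,\alpha)-c(\nu_g,\alpha)+c(\mu_h,\beta)-c(\nu_h,\beta)-c(\mu_{gh},\gamma)+c(\nu_{gh},\gamma)$ whose ingredients depend on a chosen partition $\mathcal{P}$ of $\G_\Lambda$ into cylinder sets $Z(\mu,\nu)$. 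To get the asserted identity on the nose one must (i) choose $\mathcal{P}$ so that the cells $Z(\lambda,s(\lambda))$ (for your orientation of the $\gamma_i'$) belong to it, (ii) identify the cells containing $g$, $h$, and $gh$, and (iii) use a normalization of $c$ to cancel four of the six terms — this is exactly the content of the paper's Proposition \ref{pr:psi-sigma-same}, and without it the square is unproven. Note also that for a different choice of $\mathcal{P}$ the cocycle $\sigma_c$ changes by a coboundary, so your claim of commutativity ``at cochain level'' cannot hold for an arbitrary partition; only the cohomology-level statement is partition-independent. The analogous (much easier) verifications for $n=0,1$ should also be written out, as in the paper's Proposition \ref{prop:kps-same-zero}.
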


We prove Theorem \ref{thm:diagram-commutes} via a series of propositions.  First, we describe the sheaf cohomology groups $H^n(\G, \mathscr{A})$ of $\G$ with coefficients in a $\G$-sheaf $\mathscr{A}$, and construct the maps $\rho^n_{\mathscr{P}}, \rho^n_{\shf{P}}$.  Both maps are special cases of a more general construction, detailed in Proposition \ref{prop:exact-to-sheaf} below.  The bottom row of the diagram \eqref{eq:maps} was established in Proposition \ref{prop:cocycle-coh}.  The vertical arrow $\psi^*_n$ and the commutativity of the right-hand triangle of \eqref{eq:maps} are established in Proposition \ref{pr:psi-isom}.  
Propositions  \ref{pr:psi-sigma-same} and \ref{prop:kps-same-zero} connect our work with that of Kumjian, Pask, and Sims in \cite{kps-twisted}, and complete the proof of Theorem \ref{thm:diagram-commutes}, by showing that the homomorphisms $H^n(\Lambda, A) \to H^n_c(\G_\Lambda, \shf{A})$ established in Section 6 of \cite{kps-twisted} for $n \leq 2$ make the left-hand square of \eqref{eq:maps} commute. 

We conclude the paper with a few remarks on the question of when the maps in the diagram \eqref{eq:maps} are isomorphisms.  Kumjian, Pask, and Sims  established in \cite{kps-twisted} that the left-most vertical arrow need not be surjective  but conjectured that it is injective; however, Example \ref{ex:not-isom} shows this conjecture to be false.  Finally, Remark \ref{rmk:ample-gpoid-sheaf} shows that the diagonal arrow is an isomorphism for $n = 2$.

The equivariant sheaf cohomology of an \'etale groupoid $\G$ was introduced in \cite{equiv-sheaf-coh}, inspired by Grothendieck's work in \cite{grothendieck}. See also \cite{haefliger-diff-coh} for an alternative approach.


A $\G$-sheaf $\mathscr{I}$ is said to be \emph{injective} if every injective map $f \in \Hom_\G(\mathscr{I}, \mathscr{A})$ 
has a left inverse (i.e.\ a map $g \in \Hom_\G(\mathscr{A}, \mathscr{I})$ such that $gf = id_\mathscr{I}$).
\cite[Corollary 1.6]{equiv-sheaf-coh} establishes that the category of $\G$-sheaves $\S_\G$  has enough injectives in 
the sense that every $\G$-sheaf may be embedded into an injective  $\G$-sheaf, enabling the following definition.


\begin{defn}
\label{def:sheaf-coh}
Let $\G$ be an \'etale groupoid.
A \emph{cohomology theory for $\G$} is a sequence of covariant functors {$\{H^n(\G, \cdot )\}_{n\in \N}$} from the category $\S_\G$ of $\G$-sheaves to the category of abelian groups 
which satisfy the following conditions:
\begin{itemize}
\item[i.]  $H^0(\G, \cdot ) \cong \Hom_\G(\shf{\Z}, \cdot)$.
\item[ii.]  $H^n(\G, \mathscr{I}) = 0$ if $\mathscr{I}$ is injective and $n > 0$.
\item[iii.]  For each short exact sequence of  $\G$-sheaves, $0 \to  \mathscr{A} \to  \mathscr{B} \to  \mathscr{C} \to 0$, there are
natural connecting maps $\delta^n : H^n(\G, \mathscr{C}) \to  H^{n+1}(\G, \mathscr{A})$ such that the following sequence is exact:
\begin{gather*}
0 \to H^0(\G, \mathscr{A}) \to H^0(\G, \mathscr{B}) \to H^0(\G, \mathscr{C}) \xrightarrow{\delta^0} H^1(\G, \mathscr{A}) \to \cdots \\
\xrightarrow{\delta^{n-1}} H^n(\G, \mathscr{A}) \to H^n(\G, \mathscr{B}) \to H^n(\G, \mathscr{C}) \xrightarrow{\delta^n}  H^{n+1}(\G, \mathscr{A}) \to \cdots
\end{gather*}
\end{itemize}
\end{defn}
The cohomology functors defined this way are unique up to natural equivalence, and will be denoted the \emph{(equivariant) sheaf cohomology} of $\G$.

{
\begin{rmk}\label{rmk:ext}
Since $\S_\G$ is an abelian category that has enough injectives, 
the bifunctors $\Ext^n_\G(\cdot, \cdot)$ 
which classify $n$-fold exact sequences of $\G$-sheaves are defined. 
Moreover, Section 1 of \cite{equiv-sheaf-coh} indicates that $H^n(\G, \cdot)$ and $\Ext_\G^n(\shf{\Z}, \cdot)$ are naturally equivalent. 
Thus, for a $\G$-sheaf $\mathscr{A}$ and $n \geq 1$ we may identify elements of $H^n(\G, \mathscr{A})$ as 
equivalence classes of $n$-fold exact sequences of $\G$-sheaves
\[
0 \to \mathscr{A} \to \mathscr{R}_{n-1} \to \cdots \to \mathscr{R}_0 \to \shf{\Z} \to 0.
\]
(When $n = 0$, we recall that $\Ext^0_\G(\shf{\Z}, \cdot) = \Hom_\G(\shf{\Z} ,\cdot) = H^0(\G, \cdot)$, so the correspondence between extensions and sheaf cohomology also occurs when $n =0$.)
The equivalence relation is generated by morphisms between $n$-fold exact  sequences which identify the ends, $\mathscr{A}$ and $\shf{\Z}$,  as in the following  commutative diagram.  
\begin{equation*}
\begin{tikzpicture}[xscale=0.7,yscale=0.6, >=stealth]
    \node (11) at (0,2) {$0$};
    \node (21) at (2,2) {$\mathscr{A}$};
    \node (31) at (4,2) {$\mathscr{R}_{n-1}$};
    \node (41) at (6,2) {$\cdots$};
    \node (51) at (8,2) {$ \mathscr{R}_{0}$};
    \node (61) at (10,2) {$\shf{\Z}$};
    \node (71) at (12,2) {$0$};
    \node (10) at (0,0) {$0$};
    \node (20) at (2,0) {$\mathscr{A}$};
    \node (30) at (4,0) {$\mathscr{S}_{n-1}$};
    \node (40) at (6,0) {$\cdots$};
    \node (50) at (8,0) {$ \mathscr{S}_{0}$};
    \node (60) at (10,0) {$\shf{\Z}$};
    \node (70) at (12,0) {$0$};
    \draw[->] (11) -- (21);
    \draw[->] (21) -- (31);
    \draw[->] (31) -- (41);
    \draw[->] (41) -- (51);
    \draw[->] (51) -- (61);
    \draw[->] (61) -- (71);
    \draw[->] (10) -- (20);
    \draw[->] (20) -- (30);
    \draw[->] (30) -- (40);
    \draw[->] (40) -- (50);
    \draw[->] (50) -- (60);
    \draw[->] (60) -- (70);
    \draw (21) edge[double distance=2pt]  (20);
    \draw[->] (31) -- (30);
    \draw[->] (51) -- (50);
    \draw (61) edge[double distance=2pt] (60);
\end{tikzpicture}
\end{equation*}
See page 215 of \cite{equiv-sheaf-coh} and \cite[Proposition III.5.2]{maclane}. 
We shall use this characterization of  $H^n(\G, \mathscr{A})$ in the sequel.
\end{rmk}


The following Proposition, which 
defines the two right-most arrows of \eqref{eq:maps}, relies on the terminology introduced in Remark \ref{rmk:category-gsheaves} and Equation \eqref{eq:exact-coh}.

\begin{prop}
\label{prop:exact-to-sheaf}
Let $\{\mathscr{Q}_n, \vartheta_n\}_n$ be a resolution of the constant $\G$-sheaf $\shf{\Z}$.  
Let $c \in  Z^n_{\mathscr{Q}}(\G, \mathscr{A})$.  
For $n \geq 1$ we have a commuting diagram
\begin{equation*}
\begin{tikzpicture}[xscale=0.85,yscale=0.6, >=stealth]
    \node (01) at (-2,2) {$\cdots$};
    \node (11) at (0,2) {$\mathscr{Q}_{n+1}$};
    \node (21) at (2,2) {$\mathscr{Q}_n$};
    \node (31) at (4,2) {$\mathscr{Q}_{n-1}$};
    \node (41) at (6,2) {$\mathscr{Q}_{n-2}$};
    \node (51) at (8,2) {$\cdots$};
    \node (61) at (10,2) {$\mathscr{Q}_{0}$};
    \node (71) at (11.5,2) {$\shf{\Z}$};
    \node (81) at (12.7,2) {$0$};
    \node (10) at (0,0) {$0$};
    \node (20) at (2,0) {$\mathscr{A}$};
    \node (30) at (4,0) {$\mathscr{R}_{c}$};
    \node (40) at (6,0) {$\mathscr{Q}_{n-2}$};
    \node (50) at (8,0) {$ \cdots$};
    \node (60) at (10,0) {$\mathscr{Q}_{0}$};
    \node (70) at (11.5,0) {$\shf{\Z}$};
    \node (80) at (12.7,0) {$0$};
    \draw[->] (01) -- (11);
    \draw[->] (11) -- node[above] {${\scriptstyle \vartheta_{n+1}}$}(21);
    \draw[->] (21) -- node[above] {${\scriptstyle \vartheta_n}$} (31);
    \draw[->] (31) -- node[above] {${\scriptstyle \vartheta_{n-1}}$}(41);
    \draw[->] (41) -- node[above] {${\scriptstyle \vartheta_{n-2}}$}(51);
    \draw[->] (51) -- node[above] {${\scriptstyle \vartheta_{1}}$}(61);
    \draw[->] (61) -- node[above] {${\scriptstyle \vartheta_{0}}$}(71);
    \draw[->] (71) -- (81);
    \draw[->] (10) -- (20);
    \draw[->] (20) -- node[above] {${\scriptstyle \iota}$}(30);
    \draw[->] (30) -- node[above] {${\scriptstyle \tilde{\vartheta}_{n-1}}$}(40);
    \draw[->] (40) -- node[above] {${\scriptstyle \vartheta_{n-2}}$}(50);
    \draw[->] (50) -- node[above] {${\scriptstyle \vartheta_{1}}$}(60);
    \draw[->] (60) -- node[above] {${\scriptstyle \vartheta_{0}}$}(70);
    \draw[->] (70) -- (80);
   \draw[->] (11) -- (10);
    \draw[->] (21) --  node[left] {${\scriptstyle c}$}(20);
    \draw[->] (31) -- (30); 
    \draw (41) edge[double distance=2pt] (40);
    \draw (61) edge[double distance=2pt] (60);
    \draw (71) edge[double distance=2pt] (70);
\end{tikzpicture}
\end{equation*}
with exact rows.  
Moreover, the class of this $n$-fold exact sequence in $\Ext_\G^n(\shf{\Z} , \mathscr{A})$ only depends on $[c] \in H^n_{\mathscr Q}(\G, \mathscr{A})$.
For $n = 0$ there is a unique $\tilde{c} \in \Hom_\G(\shf{\Z}, \mathscr{A})$ such that 
$c = \tilde{c} \circ \vartheta_{0}$ for $c \in  Z^0_{\mathscr{Q}}(\G, \mathscr{A})$.

Furthermore, for all $n \in \N$ we obtain a well-defined homomorphism 
\[
\rho^n_\mathscr{Q}: H^n_{\mathscr{Q}}(\G, \mathscr{A}) \to H^n(\G, \mathscr{A})
\] 
which for $n \ge 1$ is given by taking $[c] \in  H^n_{\mathscr{Q}}(\G, \mathscr{A})$ 
to the class of the $n$-fold exact sequence 
\begin{equation} 
0 \to \mathscr{A} \xrightarrow{\iota} \mathscr{R}_c \xrightarrow{\widetilde{\vartheta}_{n-1}} 
\mathscr{Q}_{n-2} \xrightarrow{\vartheta_{n-2}} \cdots \xrightarrow{\vartheta_{1}} 
\mathscr{Q}_0 \xrightarrow{\vartheta_{0}} \shf{\Z} \to 0,
\label{eq:curly-SES}
\end{equation}
and for $n=0$, we have $\rho^0_\mathscr{Q}([c]) = \tilde{c}$  
for $c \in  Z^0_{\mathscr{Q}}(\G, \mathscr{A})$.
\end{prop}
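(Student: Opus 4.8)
The plan is to identify $H^n(\G,\mathscr{A})$ with $\Ext^n_\G(\shf{\Z},\mathscr{A})$ via Remark~\ref{rmk:ext}, to work throughout in the abelian category $\S_\G$ (Remark~\ref{rmk:category-gsheaves}) so that all kernels, cokernels, direct sums and pushouts of $\G$-sheaves are available, and to produce $\rho^n_{\mathscr{Q}}([c])$ for $n\ge 1$ as the Yoneda class of the $n$-fold extension \eqref{eq:curly-SES}, in which $\mathscr{R}_c$ is a pushout manufactured from $c$.

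First I would set up the factorization. Writing $\mathscr{Z}_m := \ker\vartheta_m\subseteq\mathscr{Q}_m$, exactness of the resolution gives $\mathscr{Z}_m = \text{Im}\,\vartheta_{m+1}$ for $m\ge 0$ (with the convention $\mathscr{Z}_{-1}:=\text{Im}\,\vartheta_0 = \shf{\Z}$). For $c\in Z^n_{\mathscr{Q}}(\G,\mathscr{A})$ we have $c\circ\vartheta_{n+1}=0$, hence $\ker\vartheta_n = \text{Im}\,\vartheta_{n+1}\subseteq\ker c$, so $c$ factors uniquely through the corestriction $\vartheta_n'\colon\mathscr{Q}_n\twoheadrightarrow\mathscr{Z}_{n-1}$, say $c=\bar c\circ\vartheta_n'$ with $\bar c\in\Hom_\G(\mathscr{Z}_{n-1},\mathscr{A})$. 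Then I would let $\mathscr{R}_c$ be the pushout of the inclusion $j\colon\mathscr{Z}_{n-1}\hookrightarrow\mathscr{Q}_{n-1}$ along $\bar c$, i.e.\ the cokernel of $(\bar c,-j)\colon\mathscr{Z}_{n-1}\to\mathscr{A}\oplus\mathscr{Q}_{n-1}$, with structure maps $\iota\colon\mathscr{A}\to\mathscr{R}_c$, $\kappa\colon\mathscr{Q}_{n-1}\to\mathscr{R}_c$. Because $j$ is monic, $\iota$ is monic and $\operatorname{coker}\iota\cong\operatorname{coker}j = \mathscr{Z}_{n-2}$; composing $\mathscr{R}_c\twoheadrightarrow\mathscr{Z}_{n-2}$ with $\mathscr{Z}_{n-2}\hookrightarrow\mathscr{Q}_{n-2}$ gives $\widetilde\vartheta_{n-1}$, with $\ker\widetilde\vartheta_{n-1} = \iota(\mathscr{A})$ and $\text{Im}\,\widetilde\vartheta_{n-1} = \mathscr{Z}_{n-2} = \ker\vartheta_{n-2}$, so that splicing with the tail of the resolution yields \eqref{eq:curly-SES} (when $n=1$ this degenerates to $0\to\mathscr{A}\to\mathscr{R}_c\to\shf{\Z}\to 0$). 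The diagram commutes since $\widetilde\vartheta_{n-1}\circ\kappa = \vartheta_{n-1}$ by construction, while $\iota\circ c = \iota\circ\bar c\circ\vartheta_n' = \kappa\circ j\circ\vartheta_n' = \kappa\circ\vartheta_n$ by the pushout relation $\iota\bar c=\kappa j$.

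Next I would prove that the Yoneda class of \eqref{eq:curly-SES} in $\Ext^n_\G(\shf{\Z},\mathscr{A})$ depends only on $[c]$. If $c' = c+\delta_{n-1}b = c+b\circ\vartheta_n$ with $b\in\Hom_\G(\mathscr{Q}_{n-1},\mathscr{A})$, uniqueness of the factorization gives $\bar c' = \bar c + b\circ j$, and the $\G$-sheaf automorphism of $\mathscr{A}\oplus\mathscr{Q}_{n-1}$ sending $(a,q)\mapsto(a-b(q),q)$ carries the relations defining $\mathscr{R}_c$ onto those defining $\mathscr{R}_{c'}$, hence descends to a $\G$-sheaf isomorphism $\theta\colon\mathscr{R}_c\to\mathscr{R}_{c'}$ with $\theta\circ\iota=\iota'$ and $\widetilde\vartheta_{n-1}'\circ\theta=\widetilde\vartheta_{n-1}$. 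Together with the identities on the common tail (and on $\mathscr{A}$, $\shf{\Z}$), $\theta$ is a morphism of $n$-fold exact sequences fixing the ends, so the two sequences are equivalent in the sense of Remark~\ref{rmk:ext}; this gives a well-defined $\rho^n_{\mathscr{Q}}\colon H^n_{\mathscr{Q}}(\G,\mathscr{A})\to H^n(\G,\mathscr{A})$ for $n\ge 1$. For additivity I would use that $\overline{c+c'}=\bar c+\bar c'$ and that the pushout of the fixed short exact sequence $0\to\mathscr{Z}_{n-1}\to\mathscr{Q}_{n-1}\to\mathscr{Z}_{n-2}\to 0$ along a sum of maps is Baer-equivalent to the Baer sum of the two pushouts (a standard property of $\Ext^1$, visible from the cokernel description), together with the fact that Yoneda splicing with the common tail respects Baer sums; see \cite[Ch.\ III]{maclane}. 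Finally, for $n=0$: a cocycle $c$ satisfies $c\circ\vartheta_1=0$, so $\ker\vartheta_0 = \text{Im}\,\vartheta_1\subseteq\ker c$, and since $\vartheta_0$ is onto there is a unique $\tilde c\in\Hom_\G(\shf{\Z},\mathscr{A})$ with $c = \tilde c\circ\vartheta_0$; as $B^0_{\mathscr{Q}}(\G,\mathscr{A})=0$ and $H^0(\G,\cdot)\cong\Hom_\G(\shf{\Z},\cdot)$, setting $\rho^0_{\mathscr{Q}}([c])=\tilde c$ is obviously an additive homomorphism.

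The hard part will be the well-definedness step: one must check that the pushout is genuinely formed in $\S_\G$ — so that $\mathscr{R}_c$ really is a $\G$-sheaf, $\iota$ a $\G$-sheaf embedding, and $\operatorname{coker}\iota$ the expected quotient with its induced $\G$-action — and that $\theta$ is continuous and $\G$-equivariant. All of this is guaranteed abstractly by the fact that $\S_\G$ is abelian (Remark~\ref{rmk:category-gsheaves}), so the constructions can be carried out formally; the real care is needed in the bookkeeping that ensures $\theta$ (spliced with the identities on the tail) fixes the ends $\mathscr{A}$ and $\shf{\Z}$ in the precise sense demanded by the Yoneda equivalence of Remark~\ref{rmk:ext}.
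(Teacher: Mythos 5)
Your proposal is correct and follows essentially the same route as the paper: $\mathscr{R}_c$ is the pushout $(\mathscr{A}\oplus\mathscr{Q}_{n-1})/\{(c(q),-\vartheta_n(q))\}$ (your version, pushed out from $\mathscr{Z}_{n-1}=\text{Im}\,\vartheta_n$ along $\bar c$, yields the identical object since $\vartheta_n'$ is epi), with the same maps $\iota$, $\widetilde\vartheta_{n-1}$ and the same coboundary-induced isomorphism $[a,p]\mapsto[a-b(p),p]$ establishing independence of the representative. Your preliminary factorization through $\ker\vartheta_{n-1}$ and the explicit Baer-sum argument for additivity are welcome refinements of details the paper leaves implicit, but they do not change the argument.
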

\begin{proof}
We begin by defining the objects in the sequence \eqref{eq:curly-SES}.
Given an $n$-cocycle $c \in Z^n_{\mathscr{Q}}(\G, \mathscr{A}),$  $\mathscr{R}_c$ is the pushout of the diagram 
\[ 
\begin{tikzpicture}[yscale=0.8]
   \node (02) at (0,2) {$\mathscr{Q}_n$};
    \node (00) at (0,0) {$\mathscr{A}$};
    \node (22) at (2,2) {$\mathscr{Q}_{n-1}$};
    \draw[->] (02) -- node[left] {${\scriptstyle c}$} (00);
    \draw[->] (02) -- node[above] {${\scriptstyle \vartheta_n}$} (22);
\end{tikzpicture}
\]
That is, $\mathscr{R}_c= (\mathscr{A} \oplus \mathscr{Q}_{n-1} ) / \{ (c(q), -\vartheta_n(q)): q \in \mathscr{Q}_n\}$ (see  Remark \ref{rmk:category-gsheaves}).

We define $\widetilde{\vartheta}_{n-1}: \mathscr{R}_c \to \mathscr{Q}_{n-2}$ by 
$\widetilde{\vartheta}_{n-1}[a, p] = \vartheta_{n-1}(p)$
and $\iota: \mathscr{A} \to \mathscr{R}_c$ by $\iota(a) = [a, 0]$.
The exactness of the complex $\{\mathscr{Q}_n, \vartheta_n\}_n$ implies that $\widetilde{\vartheta}_{n-1}$ is well defined; showing that the  sequence of $\G$-sheaves 
\eqref{eq:curly-SES}
is exact relies on the exactness of the complex and the fact that $c \circ \vartheta_{n+1}=0$ for any $n$-cocycle $c$.  
We will write $\rho^n_{\mathscr{Q}}: Z_{\mathscr{Q}}^n(\G, \mathscr{A}) \to H^n(\G, \mathscr{A})$ for the map taking a cocycle $c$ to the exact sequence \eqref{eq:curly-SES}.

In fact, $\rho^n_{\mathscr{Q}}$ induces a map 
\[ \rho^n_{\mathscr{Q}}: H^n_{\mathscr{Q}}(\G, \mathscr{A}) \to H^n(\G, \mathscr{A}).\]
This follows from the observation that if $c-d \in B^n_{\mathscr{Q}}(\G, \mathscr{A})$, so that $c-d = g \circ \partial_n$ for some $g \in \Hom_\G(\mathscr{Q}_{n-1}, \mathscr{A})$, the map 
\[ \mathscr{R}_c \to \mathscr{R}_d \quad \text{ given by }\quad [a, p] \mapsto [a + g(p), p]\]
is a $\G$-equivariant sheaf homomorphism which (combined with the identity maps on the other sheaves in the exact sequence $\rho^n_{\mathscr Q}(c)$) induces a morphism $\rho^n_\mathscr{Q}(c) \to \rho^n_\mathscr{Q}(d)$.  In other words, 
\[[c]=[d] \in H^n_{\mathscr{P}}(\G, \mathscr{A}) \Rightarrow [\rho^n_{\mathscr Q}(c)] = [\rho^n_{\mathscr{Q}}(d)] \in  H^n(\G, \mathscr{A}) ,\]
so $\rho^n_{\mathscr Q}$ gives a well-defined homomorphism of cohomology groups. 

For the case $n=0$, recall that $Z^0_\mathscr{Q}(\G, \mathscr{A}) = H^0_\mathscr{Q}(\G, \mathscr{A})$.
Let $c \in Z^0_\mathscr{Q}(\G, \mathscr{A})$.   
Then, since $c \circ \vartheta_1 =0$, the exactness of the sequence 
\[ 
\cdots \to \mathscr{Q}_1 \stackrel{\vartheta_1}{\to} \mathscr{Q}_0 \stackrel{\vartheta_0}{\to} \shf{\Z} \to 0 
\]
at $\mathscr{Q}_0$ implies that there is a unique $\tilde{c} \in \Hom_\G(\shf{\Z}, \mathscr{A})$ such that 
$c = \tilde{c} \circ \vartheta_{0}$. 
It is routine to show that the resulting map 
 \[
 \rho^0_\mathscr{Q}: H^0_\mathscr{Q}(\G, \mathscr{A}) \to \Hom_\G(\shf{\Z}, \mathscr{A}) = H^0(\G, \mathscr{A})
 \]
is a homomorphism.
\end{proof}



{To complete the right-hand triangle of the diagram \eqref{eq:maps}, we now describe the maps $\psi_n: \shf{P^n} \to \mathscr{P}_n$ which induce the vertical maps $\psi_n^*$ in the diagram.}
 

Recall from Example \ref{ex:Fn-set-sheaf} that $\shf{P^n} = \Z[Y_n]$.  Given $((x, m, y), ( \lambda_1, \ldots, \lambda_n)) \in Y_n$, let
\begin{align*}
\gamma_0 &:= (x, m, y) \\
\gamma_1 &:= (y, -d(\lambda_n), \lambda_n y) \\
\gamma_2 &:= (\lambda_n y, -d(\lambda_{n-1}), \lambda_{n-1}\lambda_n y) \\
         &\quad \vdots \\
\gamma_n &:= (\lambda_2\cdots\lambda_n y, -d(\lambda_1), \lambda_1\cdots\lambda_n y),        
\end{align*}

The map 
$((x, m, y), ( \lambda_1, \ldots, \lambda_n)) \mapsto (\gamma_0, \gamma_1, \dots , \gamma_n)$ 
allows us to define $\psi_n: \shf{P^n} \to \mathscr{P}_n$ by setting 
\begin{equation}\label{eq:psi}
\psi_n([(x, m, y), ( \lambda_1, \ldots, \lambda_n)]) 
= (-1)^{\lceil{n/2}\rceil}[\gamma_0, \gamma_1, \dots , \gamma_n]
\end{equation}
for $n \geq 0$, and extending $\Z$-linearly.  
We define $\psi_{-1}: \shf{\Z} \to \shf{\Z}$ 
to be the identity map.

\begin{prop}
\label{pr:psi-isom}
{Let $\Lambda$ be a row-finite, source-free $k$-graph and let $\G_\Lambda$ be its associated groupoid. For any $n \in \N$,
the maps $\psi_n: \shf{P^n}\to \mathscr{P}_n$ are continuous, equivariant sheaf morphisms and 
\[ 
\psi_{n-1} \circ \shf{d_n} = \partial_n \circ \psi_n.
\]  
Furthermore, for any $\G_\Lambda$-sheaf $\mathscr{A}$,}  the induced map $\psi_n^*: H^n_{\mathscr{P}}(\G_\Lambda, \mathscr{A}) \to H^n_{\shf{P}}(\G_{\Lambda}, \mathscr{A})$ satisfies
\[ 
\rho^n_{\shf{P}} \circ \psi_n^* =  \rho^n_{\mathscr{P}}.
\]
It follows that the right-hand 
triangle of \eqref{eq:maps} commutes.
\end{prop}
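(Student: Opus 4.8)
The plan is to verify, in order: (i) each $\psi_n$ is a continuous, equivariant $\G_\Lambda$-sheaf morphism; (ii) the family $\{\psi_n\}_{n\ge -1}$ is a chain map, i.e. $\psi_{n-1}\circ\shf{d_n}=\partial_n\circ\psi_n$ for all $n\ge 0$; (iii) the resulting maps $\psi_n^*$ on cohomology satisfy $\rho^n_{\shf P}\circ\psi_n^*=\rho^n_{\mathscr P}$; and then (iv) read off commutativity of the right-hand triangle of \eqref{eq:maps}.

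For (i), equivariance is immediate from the formulas: by the action formula \eqref{eq:Fn-action} (and its analogue for $Y_n$ in Example \ref{ex:Fn-set-sheaf}), a groupoid element $(z,\ell,x)$ acts on a generator $[(x,m,y),(\lambda_1,\dots,\lambda_n)]$ of $\shf{P^n}$ only by replacing $(x,m,y)$ with $(z,\ell+m,y)$, whereas the morphisms $\gamma_1,\dots,\gamma_n$ in \eqref{eq:psi} depend only on $y$ and $(\lambda_1,\dots,\lambda_n)$; since the $\G_\Lambda$-action on $\mathscr{P}_n=\Z[\G_\Lambda^{(n+1)}]$ is left multiplication on $\gamma_0$ and $(z,\ell,x)(x,m,y)=(z,\ell+m,y)$, the formula \eqref{eq:psi} visibly intertwines the two actions. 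That $\psi_n$ is a fibrewise homomorphism is built into its definition, and continuity is a direct verification on the bases described in Example \ref{ex:Fn-set-sheaf} and in the remarks following Definition \ref{def:set-sheaf}, entirely analogous to the continuity arguments in the proof of Theorem \ref{thm:mod-sheaf-functor}: up to the global sign $(-1)^{\lceil n/2\rceil}$, $\psi_n$ is the map $\Z[Y_n]\to\Z[\G_\Lambda^{(n+1)}]$ induced by the continuous map of $\G_\Lambda$-sheaves of sets $Y_n\to\G_\Lambda^{(n+1)}$, $((x,m,y),(\lambda_1,\dots,\lambda_n))\mapsto(\gamma_0,\dots,\gamma_n)$.

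Step (ii) is the computational core. One expands $\partial_n\bigl(\psi_n[(x,m,y),(\lambda_1,\dots,\lambda_n)]\bigr)$ using Definition \ref{def:Pn} and \eqref{eq:psi}, and compares it term by term with $\psi_{n-1}\bigl(\shf{d_n}[(x,m,y),(\lambda_1,\dots,\lambda_n)]\bigr)$ using the formula for $\shf{d_n}$ from Example \ref{ex:shf-Fn}. The correspondence is dictated by the order reversal in \eqref{eq:psi}: composing $\gamma_{i-1}\gamma_i$ corresponds to composing $\lambda_{n-i}\lambda_{n-i+1}$, composing $\gamma_0\gamma_1$ corresponds to the ``last'' face $[(x,m-d(\lambda_n),\lambda_n y),(\lambda_1,\dots,\lambda_{n-1})]$ of $\shf{d_n}$, and dropping $\gamma_n$ corresponds to the ``first'' face $[(x,m,y),(\lambda_2,\dots,\lambda_n)]$. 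The only genuine difficulty is the sign bookkeeping, which reduces to the congruence $\lceil n/2\rceil\equiv n+\lceil (n-1)/2\rceil\pmod 2$ together with a parity check on the reindexing $i\leftrightarrow n-i$ in the two alternating sums. I expect this to be the main obstacle of the proof: it is delicate, though mechanical once set up. The case $n=0$ is trivial, as $\psi_0$ is the identity on generators, $\psi_{-1}=\mathrm{id}$, and $\partial_0\circ\psi_0=\shf{d_0}$.

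Granting (i) and (ii), the $\psi_n$ form a chain map over $\mathrm{id}_{\shf\Z}$ between the resolutions $\{\shf{P^n},\shf{d_n}\}$ and $\{\mathscr{P}_n,\partial_n\}$ of $\shf\Z$ (that $\{\shf{P^n},\shf{d_n}\}$ is a resolution follows from Proposition \ref{prop:proj-res} and the exactness of $\A\mapsto\shf\A$ in Theorem \ref{thm:mod-sheaf-functor}, noting $\shf{\Z^\Lambda}=\shf\Z$; for $\{\mathscr{P}_n,\partial_n\}$ this is Proposition \ref{prop:script-p}), so precomposition gives $\psi_n^*\colon H^n_{\mathscr P}(\G_\Lambda,\mathscr{A})\to H^n_{\shf P}(\G_\Lambda,\mathscr{A})$, $[c]\mapsto[c\circ\psi_n]$. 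For (iii) with $n\ge 1$ I would use the $\Ext$ picture of sheaf cohomology from Remark \ref{rmk:ext}: fix a cocycle $c\in Z^n_{\mathscr P}(\G_\Lambda,\mathscr{A})$; the chain map identity shows that $[a,p]\mapsto[a,\psi_{n-1}(p)]$ descends to a well-defined $\G_\Lambda$-sheaf morphism $\mathscr{R}_{c\circ\psi_n}\to\mathscr{R}_c$ between the pushouts of Proposition \ref{prop:exact-to-sheaf} (it carries the relations defining $\mathscr{R}_{c\circ\psi_n}$ into those defining $\mathscr{R}_c$), and together with the maps $\psi_k$ for $0\le k\le n-2$ and $\psi_{-1}=\mathrm{id}$ — using $\partial_k\circ\psi_k=\psi_{k-1}\circ\shf{d_k}$ at each intermediate level — this assembles into a morphism of $n$-fold exact sequences from $\rho^n_{\shf P}(c\circ\psi_n)$ to $\rho^n_{\mathscr P}(c)$ restricting to the identity on both ends $\mathscr{A}$ and $\shf\Z$. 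Hence these $n$-fold extensions represent the same class in $\Ext^n_{\G_\Lambda}(\shf\Z,\mathscr{A})=H^n(\G_\Lambda,\mathscr{A})$, i.e. $\rho^n_{\shf P}(\psi_n^*[c])=\rho^n_{\mathscr P}([c])$. The case $n=0$ is direct: if $c\in Z^0_{\mathscr P}(\G_\Lambda,\mathscr{A})$ with $c=\tilde c\circ\partial_0$, then $c\circ\psi_0=\tilde c\circ\partial_0\circ\psi_0=\tilde c\circ\shf{d_0}$, whence $\rho^0_{\shf P}(\psi_0^*[c])=\tilde c=\rho^0_{\mathscr P}([c])$. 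Finally, since the bottom row of \eqref{eq:maps} was identified in Proposition \ref{prop:cocycle-coh}, the identity $\rho^n_{\shf P}\circ\psi_n^*=\rho^n_{\mathscr P}$ is precisely the commutativity of the right-hand triangle, giving (iv).
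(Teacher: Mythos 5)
Your proposal is correct and follows essentially the same route as the paper's proof: a direct verification of continuity and equivariance on the bases from Example \ref{ex:Fn-set-sheaf}, the term-by-term sign computation using $\roof{(n-1)/2}+\roof{n/2}=n$ for the chain-map identity, and the morphism $[a,p]\mapsto[a,\psi_{n-1}(p)]$ of pushouts assembling (with the $\psi_k$) into a morphism of $n$-fold exact sequences that fixes both ends, giving $\rho^n_{\shf P}\circ\psi_n^*=\rho^n_{\mathscr P}$ via the $\Ext$ picture of Remark \ref{rmk:ext}. Your observation that $\psi_n$ is, up to sign, induced by a continuous map of $\G_\Lambda$-sheaves of sets $Y_n\to\G_\Lambda^{(n+1)}$ is a mildly cleaner packaging of the continuity argument than the paper's explicit basis computation, but the substance is the same.
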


\begin{proof}

To see that $\psi_n$ is continuous for $n \in \N$, fix $a \in \mathscr{P}_n(x)$ such that $a \in \text{Im}\, \psi_n$.  Writing $a = \sum_{i\in F} a_i [(\gamma_0^i, \ldots, \gamma_n^i)] \in \text{Im}\, \psi_n$, we have $r(\gamma_0^i) = x$ for all $i$.  There exists $\lambda \in \Lambda$ such that $x \in Z(\lambda)$ and that $Z(\lambda)$ is homeomorphic (under the projection map $\pi(\gamma_0, \ldots, \gamma_n) = r(\gamma_0)$) to an open neighborhood of each $(\gamma_0^i, \ldots, \gamma_n^i)$. In other words, for each $i \in F$, we have $z_i \in \Lambda^\infty, \ (\lambda_1^i, \cdots, \lambda_n^i, \mu^i) \in \Lambda^{*(n+1)}$ such that
\begin{align*}
\gamma_0^i &= (\lambda z_i, d(\lambda) - d(\mu^i), \mu^i z_i) \\
\gamma_1^i &= (\mu^i z_i, -d(\lambda^i_n), \lambda_n \mu^i z_i) \\
& \vdots \\
\gamma_n^i &= (\lambda^i_2 \cdots \lambda^i_n \mu^i z_i, -d(\lambda^i_1), \lambda_1^i \cdots \lambda_n^i \mu^i z_i).
\end{align*} 
If $z \in Z(\lambda)$, for $0 \leq j \leq n$, define $\gamma_j^i(z) $ by replacing $z_i$ in the formula above for $\gamma_j^i$ with $\sigma^{d(\lambda)}(z)$.  Then 
\[\mathcal{O}_{a, Z(\lambda)} = \left\{ \sum_{i \in F} a_i [(\gamma_0^i(z), \cdots, \gamma_n^i(z) )]: z \in Z(\lambda)\right\}.\]
 Using the maps $\phi^x_p: P^n_{x(p)} \to \shf{P^n}(x)$ from Example \ref{ex:shf-Fn}, and the topology on $\shf{P^n}$ given in the proof of Theorem \ref{thm:mod-sheaf-functor}, we see that 
\begin{align*}
\psi_n^{-1}(\mathcal{O}_{a, Z(\lambda)}) &= \left\{ (-1)^{\lceil n/2 \rceil}\sum_{i\in F} a_i [(z, d(\lambda)-d(\mu^i), \mu^i \sigma^{d(\lambda)}(z)), (\lambda_1^i, \ldots, \lambda_n^i)] : z \in Z(\lambda) \right\}\\
&= \left\{ \phi^z_{d(\lambda)} \left(\sum_{i \in F} (-1)^{\lceil n/2 \rceil} a_i [(\lambda_1^i, \ldots, \lambda_n^i, \mu^i)]\right): z \in Z(\lambda) \right\}\\
&= S_{\tilde{a}, \lambda}
\end{align*}
is a basic open set in $\shf{P^n}$, where $\tilde{a} = (-1)^{\lceil n/2 \rceil}\sum_{i \in F} a_i [(\lambda_1^i, \ldots, \lambda_n^i, \mu^i)] \in P^n$.  In other words, $\psi_n$ is continuous; it is straightforward to check that $\psi_n$ is also equivariant, and consequently defines an element in $\Hom_{\G_\Lambda}( \shf{P^n}, \mathscr{P}_n)$.
We now proceed to show that the $\psi_n$  intertwine the boundary maps. 

For $((x, m, y), ( \lambda_1, \ldots, \lambda_n)) \in Y_n$ we have
\begin{align*}
 (-1)^{\lceil{(n-1)/2}\rceil}&\psi_{n-1}(\shf{d_n}[(x, m, y), (\lambda_1, \dots, \lambda_n)]) \\
      &=   (-1)^{\roof{(n-1)/2}}\psi_{n-1}([(x, m, y), (\lambda_2, \dots, \lambda_n)])\\
     & \quad  +  (-1)^{\roof{(n-1)/2}}\sum_{i=2}^n (-1)^{i-1} \psi_{n-1}([(x, m, y), (\lambda_1, \dots, \lambda_{i-1}\lambda_i, \dots, \lambda_n)]) \\
     & \quad +  (-1)^{\roof{(n-1)/2}}(-1)^n\psi_{n-1}([(x, m-d(\lambda_n), \lambda_n y), (\lambda_1, \dots, \lambda_{n-1})]) \\
       & = [ \gamma_0, \gamma_1, \dots, \gamma_{n-1}] - [ \gamma_0, \gamma_1, \dots, \gamma_{n-1}\gamma_n] + \\
      & \qquad \cdots  + (-1)^{n-1} [ \gamma_0, \gamma_1\gamma_2, \dots, \gamma_n]  
       + (-1)^n [ \gamma_0\gamma_1, \gamma_2, \dots, \gamma_n]  \\
       & =  [ \gamma_0, \gamma_1, \dots, \gamma_{n-1}] +
        \sum_{i=1}^n (-1)^{n-i+1}  [ \gamma_0, \dots, \gamma_{i-1}\gamma_i, \dots, \gamma_n] \\
       & = (-1)^{\roof{n/2}}(-1)^n\partial_n\psi_{n}[(x, m, y), (\lambda_1, \dots, \lambda_n)].
\end{align*}
Hence, since $\roof{(n-1)/2} + \roof{n/2} = n$, we have
\[ 
\psi_{n-1} \circ \shf{d_n} =  (-1)^{\roof{(n-1)/2} + \roof{n/2} + n}\partial_n \circ \psi_n 
=\partial_n \circ \psi_n.  
\]

To establish the commutativity of the right-hand triangle of \eqref{eq:maps}, we first note  that {the calculation above establishes that}
\[
\psi_n^*(c) := c \circ \psi_n
\] 
is an $n$-cocycle in $Z^n_{\shf{P}}(\G_\Lambda, \mathscr{A})$ whenever $c \in Z^n_{\mathscr P}(\G_\Lambda, \mathscr{A})$.  
We must prove that 
\begin{equation}
\label{eq:right-triangle-commutativity}
[\rho^n_{\mathscr P}(c)]  = [\rho^{n}_{\shf{P}}\psi_n^*(c)] .
\end{equation}
{This is immediate when $n = 0$, since $\mathscr{P}_0 = \Z[\G] = \shf{P^0}$, and $\psi_0$ is the identity map.}

{When $n \geq 1$,} Equation \eqref{eq:right-triangle-commutativity} follows from the observation that the map $\chi: \mathscr{R}_{\psi_n^*(c)} \to \mathscr{R}_c $ given by 
\[
\chi[a, p] = [a, \psi_{n-1}(p)]
\]
 is well-defined and induces, together with $\{\psi_i\}_{i=0}^{n-2}$, a map between the exact sequences 
associated to $c$ and $\psi_n^*(c)$ as in Proposition \ref{prop:exact-to-sheaf}.    That is, the following is a commutative diagram with exact rows:
\begin{equation*}
\begin{tikzpicture}[xscale=0.8,yscale=0.7, >=stealth]
    \node (11) at (0.5,2) {$0$};
    \node (21) at (2,2) {$\mathscr{A}$};
    \node (31) at (4,2) {$\mathscr{R}_{\psi_n^*(c)}$};
    \node (41) at (6,2) {$\shf{P_{n-2}}$};
    \node (51) at (8,2) {$\cdots$};
    \node (61) at (10,2) {$\shf{P_{0}}$};
    \node (71) at (12,2) {$\shf{\Z}$};
    \node (81) at (13.5,2) {$0$};
    \node (10) at (0.5,0) {$0$};
    \node (20) at (2,0) {$\mathscr{A}$};
    \node (30) at (4,0) {$\mathscr{R}_{c}$};
    \node (40) at (6,0) {$\mathscr{P}_{n-2}$};
    \node (50) at (8,0) {$ \cdots$};
    \node (60) at (10,0) {$\mathscr{P}_{0}$};
    \node (70) at (12,0) {$\shf{\Z}$};
    \node (80) at (13.5,0) {$0$};
    \draw[->] (11) -- (21);
    \draw[->] (21) -- (31);
    \draw[->] (31) -- (41);
    \draw[->] (41) -- node[above] {${\scriptstyle \shf{d_{n-2}}}$}(51);
    \draw[->] (51) -- node[above] {${\scriptstyle \shf{d_{1}}}$}(61);
    \draw[->] (61) -- node[above] {${\scriptstyle \shf{d_{0}}}$}(71);
    \draw[->] (71) -- (81);
    \draw[->] (10) -- (20);
    \draw[->] (20) -- (30);
    \draw[->] (30) -- (40);
    \draw[->] (40) -- node[below] {${\scriptstyle \partial_{n-2}}$}(50);
    \draw[->] (50) -- node[below] {${\scriptstyle \partial_{1}}$}(60);
    \draw[->] (60) -- node[below] {${\scriptstyle \partial_{0}}$} (70);
    \draw[->] (70) -- (80);
    \draw (21) edge[double distance=2pt] (20);
    \draw[->] (31) -- node[left] {${\scriptstyle \chi}$}(30); 
    \draw[->] (41) -- node[left] {${\scriptstyle \psi_{n-2}}$} (40);
        \draw[->] (61) -- node[left] {${\scriptstyle \psi_{0}}$} (60);
    \draw (71) edge[double distance=2pt] (70);
\end{tikzpicture}
\end{equation*}

 This proves \eqref{eq:right-triangle-commutativity}; thus $\rho^n_{\shf{P}} \circ \psi_n^* =  \rho^n_{\mathscr{P}}$ and so 
  the right-hand half of \eqref{eq:maps} commutes.
\end{proof}

We now complete the proof of Theorem \ref{thm:diagram-commutes} by establishing
the commutativity of the left-hand square in the diagram \eqref{eq:maps} in the case which was considered by Kumjian, Pask, and Sims in \cite{kps-twisted}.  To that end, let $A$ be an abelian group. In \cite{kps-twisted}, the authors describe  homomorphisms ${H}^n(\Lambda, A) \to H^n_c(\G, \shf{A})$ for $n \leq 2$ (see Lemma 6.3, Remark 6.9, and the paragraph preceding Remark 6.9 of that paper).  We show in Propositions   \ref{pr:psi-sigma-same} and  \ref{prop:kps-same-zero} below that these homomorphisms make the left-hand square of  \eqref{eq:maps} commutative.

{To mesh better with the notation from \cite{kps-twisted}, in what remains of this section we will use the description of $H^n(\Lambda, A)$ in terms of categorical cocycles, as in Definition \ref{defn:categorical}, rather than the $\Lambda$-module perspective.}

\begin{prop}
\label{pr:psi-sigma-same}
Let $A$ be an abelian group, and let $c$ be a categorical $A$-valued 2-cocycle on a row-finite $k$-graph $\Lambda$ with no sources. Let $c \mapsto \sigma_c$ be the map ${H}^2(\Lambda, A) \to H^2_c(\G_\Lambda, \shf{A})$ of \cite[Lemma 6.3]{kps-twisted}.  We have 
\[ 
\psi_2^*(\eta^2(\sigma_c)) = \shf{c}.
\]
Hence, when $n = 2$, the left-hand square of the diagram \eqref{eq:maps} commutes.
\end{prop}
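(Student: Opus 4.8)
The plan is to prove the stronger, cochain-level statement: that $\psi_2^*(\eta^2(\sigma_c)) = \eta^2(\sigma_c)\circ\psi_2$ and $\shf c$ coincide as elements of $\Hom_{\G_\Lambda}(\shf{P^2},\shf A)$. Since both are morphisms of $\G_\Lambda$-sheaves $\shf{P^2}\to\shf A$, it suffices to compare them on the generators $[(x,m,y),(\lambda_1,\lambda_2)]$ of $\shf{P^2}$ exhibited in Examples \ref{ex:shf-Fn} and \ref{ex:Fn-set-sheaf}. Throughout one uses that $\shf{A^\Lambda}$ is the constant $\G_\Lambda$-sheaf $\shf A$ (as $A^\Lambda(\lambda)=\mathrm{id}$ for every $\lambda$), so that the $\G_\Lambda$-action on $\shf A$ is trivial; and, to match the setting of \cite{kps-twisted}, we may and do assume $c$ is normalized (cf.\ the remark following Definition \ref{defn:categorical}).

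First I would identify $\shf c$ on generators. Via Proposition \ref{prop:same-coh}, the categorical cochain $c$ corresponds to the $\Lambda$-module morphism $f\colon P^2\to A^\Lambda$ with $f([\lambda_0,\lambda_1,\lambda_2])=A^\Lambda(\lambda_2)(c(\lambda_0,\lambda_1))=c(\lambda_0,\lambda_1)$, and $\shf c=\shf f$ is its image under the functor of Theorem \ref{thm:mod-sheaf-functor}. Writing the generator $[(x,m,y),(\lambda_1,\lambda_2)]\in\shf{P^2}(x)$ in the inductive-limit picture as $\phi^x_p([\lambda_1,\lambda_2,\nu])$ with $\nu:=y(0,p-m)$ for $p\ge m$ large enough (Example \ref{ex:shf-Fn}), and using that $\shf f$ commutes with the structure maps $\phi^x_p$ together with $\shf{A^\Lambda}=\shf A$, one gets
\[
\shf c\big([(x,m,y),(\lambda_1,\lambda_2)]\big)=c(\lambda_1,\lambda_2)\in A=\shf A(x).
\]

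Next I would compute $\psi_2^*(\eta^2(\sigma_c))$ on the same generator. By \eqref{eq:psi}, since $\roof{2/2}=1$, we have $\psi_2([(x,m,y),(\lambda_1,\lambda_2)])=-[\gamma_0,\gamma_1,\gamma_2]$ with $\gamma_0=(x,m,y)$, $\gamma_1=(y,-d(\lambda_2),\lambda_2y)$ and $\gamma_2=(\lambda_2y,-d(\lambda_1),\lambda_1\lambda_2y)$. By the formula for $\eta^2$ in Proposition \ref{prop:cocycle-coh} and triviality of the $\G_\Lambda$-action on $\shf A$, $\eta^2(\sigma_c)([\gamma_0,\gamma_1,\gamma_2])=\gamma_0\cdot\sigma_c(\gamma_1,\gamma_2)=\sigma_c(\gamma_1,\gamma_2)$, hence $\psi_2^*(\eta^2(\sigma_c))([(x,m,y),(\lambda_1,\lambda_2)])=-\sigma_c(\gamma_1,\gamma_2)$. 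Thus the whole proposition reduces to the single identity
\[
\sigma_c\big((y,-d(\lambda_2),\lambda_2y),\ (\lambda_2y,-d(\lambda_1),\lambda_1\lambda_2y)\big)=-\,c(\lambda_1,\lambda_2),
\]
valid for every $y\in\Lambda^\infty$ with $r(y)=s(\lambda_2)$ and every composable pair $(\lambda_1,\lambda_2)$.

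The remaining step, which is the main obstacle, is to check this identity directly against the explicit formula for $\sigma_c$ in \cite[Lemma 6.3]{kps-twisted}. The point is that the pair in question is as simple as possible: $\gamma_1\in Z(s(\lambda_2),\lambda_2)$, $\gamma_2\in Z(s(\lambda_1),\lambda_1)$ and $\gamma_1\gamma_2\in Z(s(\lambda_2),\lambda_1\lambda_2)$ are already in reduced form and have vertex first coordinates, so the auxiliary path extensions appearing in the \cite{kps-twisted} cocycle formula may be taken trivial and the degenerate terms vanish by normalization of $c$; the formula then collapses to $\pm c(\lambda_1,\lambda_2)$. What is left is purely bookkeeping: checking that the order of composition ($\lambda_1\lambda_2$ versus $\lambda_2\lambda_1$) and the direction of the $\Lambda$-module action in \cite{kps-twisted} agree with our conventions, and that the resulting sign is exactly the $(-1)^{\roof{2/2}}$ already built into $\psi_2$ in \eqref{eq:psi}. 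Once the displayed identity is established we have $\psi_2^*(\eta^2(\sigma_c))=\shf c$ as cochains; passing to cohomology classes then says precisely that the composite $[c]\mapsto\psi_2^*(\eta^2([\sigma_c]))$ agrees with the top horizontal map $[c]\mapsto[\shf c]$ of \eqref{eq:maps}, i.e.\ the left-hand square of \eqref{eq:maps} commutes for $n=2$ (and hence on all of $H^2(\Lambda,A)$, since every class has a normalized representative).
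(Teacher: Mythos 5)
Your reduction is exactly the one the paper uses: identify $\shf{c}$ on generators as $c(\lambda_1,\lambda_2)$ via Proposition \ref{prop:same-coh} and Theorem \ref{thm:mod-sheaf-functor}, unwind $\psi_2^*(\eta^2(\sigma_c))$ using \eqref{eq:psi} and Proposition \ref{prop:cocycle-coh} to get $-\sigma_c(\gamma_1,\gamma_2)$, and then reduce everything to the single identity $\sigma_c(\gamma_1,\gamma_2)=-c(\lambda_1,\lambda_2)$. Up to that point your argument matches the paper's step for step and is correct.

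The problem is that you stop exactly where the actual content of the proposition begins. The identity you defer as ``purely bookkeeping'' is the whole point, and you leave its sign as $\pm c(\lambda_1,\lambda_2)$, which is not a proof. Two things need to be supplied. First, the formula for $\sigma_c(g,h)$ in \cite[Lemma~6.3]{kps-twisted} is not canonical: it depends on a choice of partition $\mathcal{P}$ of $\G_\Lambda$ into cylinder sets $Z(\mu,\nu)$, and the data $\mu_g,\nu_g,\mu_h,\nu_h,\mu_{gh},\nu_{gh},\alpha,\beta,\gamma$ entering the formula are read off from which members of $\mathcal{P}$ contain $g$, $h$, and $gh$. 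Your claim that ``the auxiliary path extensions may be taken trivial'' implicitly assumes that $\mathcal{P}$ can be chosen to contain $Z(s(\lambda),\lambda)$ (and $Z(\lambda,s(\lambda))$) for every $\lambda\in\Lambda$; this is true, but it requires going back into the proof of Lemma~6.6 of \cite{kps-twisted} to construct such a partition, and the paper says so explicitly. Second, once that partition is fixed you must actually record $\mu_g=\mu_{gh}=\alpha=\gamma=r(y)$, $\nu_g=\beta=\lambda_2$, $\mu_h=r(\lambda_2)$, $\nu_h=\lambda_1$, $\nu_{gh}=\lambda_1\lambda_2$, substitute into the six-term formula, and watch the degenerate terms cancel by normalization of $c$, leaving $\sigma_c(g,h)=-c(\lambda_1,\lambda_2)$ on the nose. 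Without this explicit evaluation the sign compatibility with the $(-1)^{\roof{n/2}}$ in \eqref{eq:psi} — which is precisely what the proposition asserts — remains unverified.
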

\begin{proof}
We begin by observing that, since the action of $\Lambda$ on $A$ is trivial,
\begin{align*} 
\shf{c}[(x, m, y), (\lambda_1, \lambda_2)]  = (\zeta^2)^{-1} c(\lambda_1, \lambda_2, r(y)) = c(\lambda_1, \lambda_2).
\end{align*}
where $\zeta^*$ denotes the isomorphism between the categorical cohomology and the $\Lambda$-module cohomology given in Proposition \ref{prop:same-coh}.

Recall that $\eta^n$ is the inverse to the isomorphism $\xi^n: H^n_{\mathscr{P}}(\G, \shf{A}) \to H^n_c(\G, \shf{A})$ of Proposition \ref{prop:cocycle-coh}.  
Then using formula (\ref{eq:psi})  we have
\begin{align*}
\psi_2^*(\eta^2(\sigma_c))& [(x,m,y), (\lambda_1, \lambda_2)] \\
&= (-1)^{\roof{2/2}} (x, m, y) \cdot \sigma_c((y, -d(\lambda_2), \lambda_2 y), (\lambda_2 y, -d(\lambda_1), \lambda_1 \lambda_2 y)) \\
&= - \sigma_c((y, -d(\lambda_2), \lambda_2 y), (\lambda_2 y, -d(\lambda_1), \lambda_1 \lambda_2 y)) ,
\end{align*}
since we posited a trivial action of $\Lambda$ (and hence of $\G_\Lambda$) on $A$.

In order to evaluate $\sigma_c ((y, -d(\lambda_2), \lambda_2 y), (\lambda_2 y, -d(\lambda_1), \lambda_1 \lambda_2 y))$, we must find a collection $\mathcal{P}$ of cylinder sets $Z(\mu, \nu)$ which forms a partition of the groupoid $\G_\Lambda$ as in Lemma 6.6 of \cite{kps-twisted}.

Reviewing the proof of the above-cited Lemma 6.6, we see that we can use the techniques presented in that proof to construct a partition $\mathcal{P}$ of $\G_\Lambda$ such that for any $\lambda \in \Lambda$, we have $Z(\lambda, s(\lambda)) \in \mathcal{P}$ and $Z(s(\lambda), \lambda) \in \mathcal{P}$.

Thus, since $g := (y, -d(\lambda_2), \lambda_2 y)$ and $h:= (\lambda_2 y, -d(\lambda_1), \lambda_1 \lambda_2 y)$, as well as their product $gh= (y, -d(\lambda_1 \lambda_2), \lambda_1 \lambda_2 y)$ are in cylinder sets of the form $Z(s(\lambda), \lambda)$, we have (in the notation of Lemma 6.3 of \cite{kps-twisted}) 
\[\mu_g = \mu_{gh} = \alpha  =  \gamma =r(y) ; \quad \nu_g = \beta = \lambda_2; \quad \mu_h = r(\lambda_2); \quad \nu_h =  \lambda_1; \quad \nu_{gh} = \lambda_1 \lambda_2.\]
  Therefore, assuming that the 2-cocycle $c$ is normalized (which assumption we can always make, up to cohomology), we have
\begin{align*}
\psi_2^*(\eta^2&\,(\sigma_c))[(x,m, y),    (\lambda_1, \lambda_2)] \\
& = -\sigma_c ((y, -d(\lambda_2), \lambda_2 y), (\lambda_2 y, -d(\lambda_1), \lambda_1 \lambda_2 y))  \\
&= - (c(\mu_g, \alpha) - c(\nu_g, \alpha) + c(\mu_h, \beta) - c(\nu_h, \beta) - c(\mu_{gh}, \gamma) + c(\mu_{gh}, \gamma)) \\ 
 &= -(c(r(y), r(y)) - c(r(y), \lambda_2) + c(r(\lambda_2), \lambda_2) - c(\lambda_1, \lambda_2) \\
& \qquad - c(r(y), r(y)) + c(\lambda_1\lambda_2, r(y))) \\
&= c(\lambda_1, \lambda_2) \\
&  = \shf{c}[(x, m, y), (\lambda_1, \lambda_2)]. \qedhere
\end{align*}
\end{proof}

\begin{prop}
For an abelian group $A$ and $n \leq 1$, the homomorphisms $H^n(\Lambda, A) \to H^n(\G_\Lambda, \shf{A})$ identified in the paragraphs following Corollary 6.8 of \cite{kps-twisted} make the left-hand square of \eqref{eq:maps} commute.
\label{prop:kps-same-zero}
\end{prop}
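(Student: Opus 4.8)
The plan is to mimic the proof of Proposition \ref{pr:psi-sigma-same}: since all of the maps involved are explicit in low degree, I would verify the commutativity of the left-hand square of \eqref{eq:maps} by a direct computation on cochains, treating $n = 0$ and $n = 1$ in turn. First I would recall the relevant maps. Under the isomorphism of Proposition \ref{prop:same-coh}, an $A$-valued categorical $0$-cocycle on $\Lambda$ is a function $c$ on $\Lambda^0$ which is constant on connected components, while an $A$-valued categorical $1$-cocycle is an additive map $c \colon \Lambda \to A$; such a $c$ automatically vanishes on $\Lambda^0$, since $c(v) = c(vv) = 2c(v)$. The homomorphisms of \cite{kps-twisted} send these to the continuous groupoid cocycles $\sigma_c \in C^0_c(\G_\Lambda, \shf{A})$ and $\sigma_c \in C^1_c(\G_\Lambda, \shf{A})$ given, respectively, by $\sigma_c(x) = c(r(x))$ and $\sigma_c(x, m, y) = c(x(0,j)) - c(y(0,\ell))$, where $\sigma^j(x) = \sigma^\ell(y)$ and $j - \ell = m$; additivity of $c$ together with its vanishing on $\Lambda^0$ makes the latter independent of the choice of $j$ and $\ell$, just as in \cite[Lemma 6.6]{kps-twisted}. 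Finally, I would observe, as in Proposition \ref{pr:psi-sigma-same}, that the top arrow of \eqref{eq:maps} is $[c] \mapsto [\shf{c}]$ for the functor of Theorem \ref{thm:mod-sheaf-functor}, and that unwinding Examples \ref{ex:shf-Fn} and \ref{ex:Fn-set-sheaf} together with Proposition \ref{prop:same-coh} gives $\shf{c}[(x,m,y)] = c(r(y))$ when $n = 0$ and $\shf{c}[(x,m,y),\lambda_1] = c(\lambda_1)$ when $n = 1$.

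I would then compute $\psi_n^*(\eta^n(\sigma_c)) = \eta^n(\sigma_c) \circ \psi_n$ on generators, using formula \eqref{eq:psi} for $\psi_n$, the formula for $\eta^n$ from Proposition \ref{prop:cocycle-coh}, and the triviality of the $\G_\Lambda$-action on $\shf{A}$. When $n = 0$ we have $\psi_0 = \mathrm{id}$ and $\mathscr{P}_0 = \shf{P^0}$, so
\[
\psi_0^*(\eta^0(\sigma_c))[(x,m,y)] = (x,m,y)\cdot\sigma_c(y) = \sigma_c(y) = c(r(y)) = \shf{c}[(x,m,y)].
\]
When $n = 1$, \eqref{eq:psi} gives $\psi_1[(x,m,y),\lambda_1] = -[\gamma_0,\gamma_1]$ with $\gamma_0 = (x,m,y)$ and $\gamma_1 = (y, -d(\lambda_1), \lambda_1 y)$, so by Proposition \ref{prop:cocycle-coh},
\[
\psi_1^*(\eta^1(\sigma_c))[(x,m,y),\lambda_1] = -\gamma_0\cdot\sigma_c(\gamma_1) = -\sigma_c(\gamma_1);
\]
evaluating $\sigma_c(\gamma_1)$ with $j = 0$ and $\ell = d(\lambda_1)$ (using $\sigma^0(y) = y = \sigma^{d(\lambda_1)}(\lambda_1 y)$) gives $\sigma_c(\gamma_1) = c(r(y)) - c(\lambda_1) = -c(\lambda_1)$, whence $\psi_1^*(\eta^1(\sigma_c))[(x,m,y),\lambda_1] = c(\lambda_1) = \shf{c}[(x,m,y),\lambda_1]$.

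In both degrees this shows $\psi_n^*(\eta^n(\sigma_c)) = \shf{c}$ already at the level of cochains, hence also on cohomology; since $\eta^n$ is an isomorphism $H^n_c(\G_\Lambda, \shf{A}) \xrightarrow{\cong} H^n_{\mathscr{P}}(\G_\Lambda, \shf{A})$ by Proposition \ref{prop:cocycle-coh}, this is precisely the commutativity of the left-hand square of \eqref{eq:maps} for $n \le 1$. Together with Propositions \ref{prop:cocycle-coh}, \ref{prop:exact-to-sheaf}, \ref{pr:psi-isom} and \ref{pr:psi-sigma-same}, this completes the proof of Theorem \ref{thm:diagram-commutes}.

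The main obstacle, as in Proposition \ref{pr:psi-sigma-same}, is bookkeeping rather than anything conceptual: one must pin down the degree $\le 1$ maps of \cite{kps-twisted} inside the present framework (in particular checking that the proposed degree-$1$ formula is well defined), translate between the categorical-cocycle and $\Lambda$-module descriptions of $\shf{c}$ through Proposition \ref{prop:same-coh} and Examples \ref{ex:shf-Fn} and \ref{ex:Fn-set-sheaf}, and keep the sign conventions of $\psi_n$ (the factor $(-1)^{\roof{n/2}}$) consistent with those of the boundary maps and of $\eta^n$. Because no partition of $\G_\Lambda$ enters when $n \le 1$, this computation is in fact shorter than that of Proposition \ref{pr:psi-sigma-same}.
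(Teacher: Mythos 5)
Your computation is correct and follows essentially the same route as the paper's proof: write down the degree $0$ and $1$ maps of \cite{kps-twisted}, evaluate $\psi_n^*(\eta^n(\sigma_c))$ on generators using \eqref{eq:psi} and the triviality of the action, and match against $\shf{c}$ as computed from Theorem \ref{thm:mod-sheaf-functor} and Proposition \ref{prop:same-coh}. The only (harmless) difference is that you justify normalization by observing that a categorical $1$-cocycle is automatically additive and hence vanishes on vertices, where the paper simply assumes $c$ normalized.
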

\begin{proof}
Recall from \cite{kps-twisted} that when $n=1,$ the map $H^1(\Lambda, A) \to H^1(\G_\Lambda, \shf{A})$ (which we will denote $c \mapsto \sigma^1_c$  in analogy with Proposition \ref{pr:psi-sigma-same}) is given by 
\[ 
\sigma_c^1(y, \ell - j, z) =  c(y(0,\ell)) - c(z(0, j)),
\]
and when $n=0$ we have $\sigma^0_f[y]= f(r(y))$.
Thus, 
\begin{align*}
\psi_0^*(\eta^0(\sigma^0_f))[(x, m, y)] &= (x, m, y) \cdot f(r(y)) = f(r(y)), \ \text{ and }\\
\psi_1^*(\eta^1(\sigma^1_c))[(x, m, y), \lambda] &= (-1)(x, m, y) \cdot \left( c(r(y)) - c(\lambda) \right) \\
&= c(\lambda)
 \end{align*}
whenever $c$ is a normalized cocycle.
Moreover, for a 1-cocycle $c \in Z^1(\Lambda, A),$ chasing through the formulas from Theorem \ref{thm:mod-sheaf-functor} and Proposition \ref{prop:same-coh} reveals that \[
\shf{c}[(x, m, y), \lambda] = c(\lambda)\]
as well, and for a 0-cocycle $f$ we also have $\shf{f}[(x, m,y)] = f(r(y)) = {\psi}_0^*(\eta^0(\sigma^0_f))$.
\end{proof}

We remark that the homomorphisms $H^n(\Lambda, A) \to H^n(\G_\Lambda, \shf{A})$ established by Kumjian, Pask, and Sims will not, in general, be  isomorphisms.  
Remark 6.9 of \cite{kps-twisted} presents an example of a 1-graph $B_2$ {with $H^1(B_2, \Z^{B_2}) \cong \Z^2$, but for which}  $H^1_c(\G_{B_2}, \shf{\Z})$ surjects onto $\Z[\frac{1}{2}]$.  
Hence,  $H^1_c(\G_{B_2}, \shf{\Z})$ is not finitely generated. 
Consequently, we cannot expect the map $H^n(\Lambda, A) \to H^n_c(\G_\Lambda, \shf{A})$ to be surjective in general.

Moreover, we have the following example of a 1-graph for which this map is not injective.  
This answers in the negative a conjecture of Kumjian, Pask, and Sims ({stated in the paragraph} immediately following Corollary 6.8 of \cite{kps-twisted}).

\begin{example}
\label{ex:not-isom}
Let $\Lambda$ be the path category of the $1$-graph with $\Lambda^0 := \{ v_n : n \in \N \}$ and edge set $\Lambda^1 := \{f, g \} \cup \{ e_1, e_2, \dots \}$, where  
$s(f) = s(g) = v_1$, $r(f) = r(g) = v_0$ and  $r(e_n) = v_n$, $s(e_n) = v_{n+1}$ for all $n = 1, 2, \dots$.
Then $\Lambda^\infty$ is a countably infinite space;  
there are two infinite paths  $x_\pm$ 
 {with range} $v_0$ and exactly one 
 with range $v_n$ for $n \ge 1$.  
Thus, 
\[ \G_\Lambda = \{ (x_+, 0, x_-), (x_-, 0, x_+)\} \cup \{ (x, n, \sigma^n(x)), (\sigma^n(x), -n, x): x \in \Lambda^\infty, \ n \in \N\}.\]
Let $c: \G_\Lambda \to \Z$ be a 1-cocycle; that is, an additive function. 
For any infinite path $x$, there is a unique $n \in \N$ such that $(x, -n, x_+) \in \G_\Lambda$.  Thus, setting 
\[b_c(x) := c(x, -n, x_+)\]
if $x \not= x_+$, and setting $b_c(x_+) = 0$, we have   $ c(x, m, y) = b_c(x) - b_c(y).$

In other words, every 1-cocycle on $\G_\Lambda$ is a coboundary, so $H^1(\G_\Lambda, \Z) = 0$.

However, ${H^1}(\Lambda, \Z^\Lambda) = \Z$ is generated by the cocycle $c$ given by 
\[ c(\lambda) = \begin{cases}
1, & \lambda = f e_1 \cdots e_\ell \text{ for some } \ell \in \N\\
0, & \text{ else.}
\end{cases}\]
The fact that $c$ has infinite order follows from the fact that $c(f) = 1$ but $c(g) = 0$, even though these edges have the same source and range.  In more detail, suppose  we have a function $b: \Lambda^0 \to \Z$ such that for all $\lambda \in \Lambda,\ n c (\lambda) =  b(r(\lambda)) - b(s(\lambda)).$
Then
\[ n = n c(f) =  b(v_0) - b(v_1) = n c(g) = 0.\]
\end{example}

\begin{rmk}
\label{rmk:ample-gpoid-sheaf}
Recall that an \'etale groupoid  with a basis of clopen sets is said to be an \emph{ample groupoid}.
For example,  $\G_\Lambda$  is ample for any  row-finite higher-rank graph  $\Lambda$  with no sources.

Given an \'etale groupoid $\G$ and a $\G$-sheaf $\mathscr{A}$, Theorem 2.7 of \cite{equiv-sheaf-coh} establishes that the group of isomorphism classes of groupoid extensions of the form $\mathscr{A} \to \H \to \G$
forms a group $T_\G(\mathscr{A})$ and the assignment $\mathscr{A}  \mapsto T_\G(\mathscr{A})$ is a half-exact covariant functor which is naturally isomorphic to the first derived functor of $Z_\G$
(the functor which assigns the group of continuous $\mathscr{A}$-valued 1-cocycles on $\G$ to $\mathscr{A}$).
When  $\G$ is an ample groupoid, every such extension has a continuous section and therefore is determined by a
continuous $\mathscr{A}$-valued 2-cocycle on $\G$.  It follows that 
\[H^2_c(\G, \mathscr{A}) \cong T_\G(\mathscr{A}).\]
Let $\mathscr{A}^0$ denote the same sheaf $\mathscr{A}$, regarded as an ordinary sheaf (with no $\G$-action).
Then by \cite[Theorem 3.7]{equiv-sheaf-coh} there is a long exact sequence
\begin{align*}
0 \to H^0(\G, \mathscr{A}) &\to H^0(\G\z, \mathscr{A}^0) \to Z_\G^0(\mathscr{A}) \to H^1(\G, \mathscr{A}) \to \\
&H^1(\G\z, \mathscr{A}^0) \to Z_\G^1(\mathscr{A}) \to H^2(\G, \mathscr{A}) \to  H^2(\G\z \mathscr{A}^0) \to \cdots.
\end{align*}
When $\G$ is ample, $\G\z$ has a basis of clopen sets, and thus $H^n(\G\z, \mathscr{A}^0) = 0$ for $n \ge 1$. 
It now follows by the above long exact sequence that
\[
Z_\G^{n}(\mathscr{A}) \cong H^{n+1}(\G, \mathscr{A}) \quad \text{ for all } n \geq 1.\] 
In the case $n=1$, the isomorphisms mentioned above combine to give us
\[
H^2_c(\G, \mathscr{A}) \cong T_\G(\mathscr{A}) \cong Z_\G^1(\mathscr{A}) \cong H^2(\G, \mathscr{A}).
\]
\end{rmk}

\textsc{Acknowledgments}: The second author would like to thank the first author and her colleagues at both the University of Colorado, Boulder and the Universit\"at M\"unster 
for their hospitality and support.  
He is also grateful to Wojcieh Szyma\'{n}ski for preliminary discussions relating to the cohomology of $k$-graphs and for his hospitality and support during a recent visit to the University of Southern Denmark.

\bibliographystyle{amsplain}
\bibliography{eagbib}
\end{document}